\newtheorem{theorem}{Theorem}
\newtheorem{lemma}{Lemma}
\newtheorem{remark}{Remark}
\newtheorem{definition}{Definition}
\newtheorem{corollary}{Corollary}
\newtheorem{problem}{Problem}
\newtheorem{assumption}{Assumption}
\newcommand{\abs}[1]{\ensuremath{\left\vert#1\right\vert}}
\newcommand{\cfunof}[1]{\ensuremath{\left\{#1\right\}}}
\newcommand{\C}{\ensuremath{\mathbb{C}}}
\newcommand{\discalg}{\ensuremath{\mathscr{A}_0}}
\newcommand{\ESPR}{\ensuremath{\mathrm{ESPR}}}
\newcommand{\funof}[1]{\ensuremath{\left(#1\right)}}
\newcommand{\Hfty}{\ensuremath{\mathscr{H}_{\infty}}}
\newcommand{\jw}{\ensuremath{\left(j\omega\right)}}
\newcommand{\jwo}{\ensuremath{\left(j\omega_0\right)}}
\newcommand{\lft}[3]{\ensuremath{\mathcal{F}_{#1}\left({#2},{#3}\right)}}
\newcommand{\norm}[1]{\ensuremath{\left\Vert #1 \right\Vert}}
\newcommand{\PR}{\ensuremath{\mathrm{PR}}}
\newcommand{\RH}{\ensuremath{\mathscr{RH}_{\infty}}}
\newcommand{\R}{\ensuremath{\mathbb{R}}}
\newcommand{\Rat}{\ensuremath{\mathscr{R}}}
\newcommand{\s}{\ensuremath{\left(s\right)}}
\newcommand{\sqfunof}[1]{\ensuremath{\left[#1\right]}}
\newcommand{\tm}{\ensuremath{\left(t\right)}}
\newcommand{\wo}{\ensuremath{\left(\omega_0\right)}}
\definecolor{bleudefrance}{rgb}{0.19, 0.55, 0.91}
\definecolor{ao(english)}{rgb}{0.0, 0.5, 0.0}
\definecolor{brickred}{HTML}{AA2F40}
\definecolor{dark-gray}{gray}{0.4}
\newcommand{\addcite}[0]{\ifthenelse{\boolean{showcomments}}
{\textcolor{purple}{(add cite(s)) }}{}}%
\newcommand{\addcites}[0]{\ifthenelse{\boolean{showcomments}}
{\textcolor{purple}{(add cite(s)) }}{}}%
\newcommand{\enrique}[1]{  \ifthenelse{\boolean{showcomments}}
{\todo[inline,color=bleudefrance]{Enrique: #1}}{}}
\newcommand{\emmargin}[1]{\ifthenelse{\boolean{showcomments}}{\marginpar{\color{bleudefrance}\scriptsize EM: #1}}{}}
\newcommand{\richard}[1]{  \ifthenelse{\boolean{showcomments}}
{\todo[inline,color=Maroon]{Richard: #1}}{}}
\newcommand{\rpmargin}[1]{\ifthenelse{\boolean{showcomments}}{\marginpar{\color{Maroon}\scriptsize RP: #1}}{}}
\newcommand{\aem}[1]{
\ifthenelse{\boolean{showedits}}
{\added[id=EM]{#1}}
{\!#1\hspace{-4.75pt}}
}
\newcommand{\repem}[2]{
\ifthenelse{\boolean{showedits}}
{\replaced[id=EM]{#1}{#2}}
{\!#1\hspace{-4.75pt}}
}
\newcommand{\dem}[1]{
\ifthenelse{\boolean{showedits}}
{\deleted[id=EM]{#1}}
{}
}
\g@addto@macro\normalsize{%
 \setlength\abovedisplayskip{5pt plus 5pt minus .5pt} 
 \setlength\belowdisplayskip{5pt plus 5pt minus .5pt} 
}
 \let\origsection\section
 \renewcommand\section{\@ifstar{\starsection}{\nostarsection}}
 \newcommand\nostarsection[1]
\sectionprelude\origsection{#1}\sectionpostlude}
 \newcommand\starsection[1]
 \newcommand\sectionprelude{%
   \vspace{-.5pt}
 }
 \newcommand\sectionpostlude{%
   \vspace{-.5pt}
 }
\def\c{white}
\tikzset{every path/.style={thick}}
\tikzset{>=stealth}
\pgfplotsset{compat=1.13}
\newenvironment{proof}{\IEEEproof}{\endIEEEproof}
\crefname{problem}{problem}{problems}
\crefname{proposition}{proposition}{propositions}
\crefname{procedure}{procedure}{procedures}
\crefname{assumption}{assumption}{assumptions}
\newacronym{agc}{AGC}{Automatic Generation Control}
\newacronym{ace}{ACE}{Area Control Error}
\newacronym{pr}{PR}{Positive Real}
\newacronym{spr}{SPR}{Strictly Positive Real}
\newacronym{espr}{ESPR}{Extended Strictly Positive Real}
\newacronym{kyp}{KYP}{Kalman-Yakobovich-Popov}
\newacronym{lky}{LKY}{Lefschetz-Kalman-Yakubovich}
\newacronym{lmi}{LMI}{Linear Matrix Inequality}
\newacronym{lft}{LFT}{Linear Fractional Transformation}
\newcommand*\diag[0]{\mbox{diag}}
\newcommand\oprocendsymbol{\hbox{$\square$}}
\newcommand\oprocend{\relax\ifmmode\else\unskip\hfill\fi\oprocendsymbol}
\begin{document}
%
\title{Robust Scale-Free Synthesis for Frequency Control in Power Systems}
%
%
%

\author{Richard~Pates and Enrique~Mallada

\thanks{R. Pates is a member of the LCCC Linnaeus Center and the ELLIIT Excellence Center at Lund University, Lund, Sweeden. Email: \text{richard.pates@control.lth.se}. 
E. Mallada is with the Department of ECE at Johns Hopkins University, Baltimore, Maryland, USA. Email:\text{mallada@jhu.edu}.
This work was supported by the Swedish Foundation for Strategic Research, the Swedish Research Council through the LCCC Linnaeus Center, and NSF through grants CNS 1544771, EPCN 1711188, AMPS 1736448, and CAREER 1752362. A preliminary version of this work has been presented in \cite{PM17}.}

}

\maketitle

\begin{abstract}
The AC frequency in electrical power systems is conventionally regulated by synchronous machines. The gradual replacement of these machines by asynchronous renewable-based generation, which provides little or no frequency control, increases system uncertainty and the risk of instability. This imposes hard limits on the proportion of renewables that can be integrated into the system. In this paper we address this issue by developing a framework for performing frequency control in power systems with arbitrary mixes of conventional and renewable generation. Our approach is based on a robust stability criterion that can be used to guarantee the stability of a full power system model on the basis of a set of decentralised tests, one for each component in the system. It can be applied even when using detailed heterogeneous component models, and can be verified using several standard frequency response, state-space, and circuit theoretic analysis tools. Furthermore the stability guarantees hold independently of the operating point, and remain valid even as components are added to and removed from the grid. By designing decentralised controllers for individual components to meet these decentralised tests, every component can contribute to the regulation of the system frequency in a simple and provable manner. Notably, our framework certifies the stability of several existing (non-passive) power system control schemes and models, and allows for the study of robustness with respect to delays.
\end{abstract}

\begin{IEEEkeywords}
Power systems, frequency control, robust stability, decentralised control synthesis.
\end{IEEEkeywords}

\IEEEpeerreviewmaketitle

\section{Introduction}


The composition of the electric grid is in a state of flux~\cite{Milligan:2015ju}. 
Motivated by the need to reduce carbon emissions, conventional synchronous generators, with relatively large inertia, are being replaced with renewable energy sources with little (wind) or no inertia (solar) at all~\cite{Winter:2015dy}. 
In addition, the steady increase of power electronics on the demand side is gradually diminishing the load sensitivity to frequency variations~\cite{WoodWollenberg1996}.
As a result, rapid frequency fluctuations are becoming a major source of concern for several grid operators~\cite{Boemer:2010wa,Kirby:2005uy}. Besides increasing the risk of frequency instabilities, this dynamic degradation also places limits on the total amount of renewable generation that can be sustained by today's electric grids. Ireland, for instance, is already resorting to wind curtailment whenever wind production exceeds $50\%$ of existing demand in order to preserve the grid stability.

One approach that has been proposed to mitigate this degradation is to use inverter-based generation to mimic synchronous generator behaviour, by implementing so called virtual inertia~\cite{Driesen:ft}. The rationale  is that by mimicking synchronous generator dynamics, virtual inertia will restore the robust frequency regulation that the system used to enjoy. However, it is unclear whether this particular choice of control is the most suitable for the task. 
Unlike generator dynamics that set the grid frequency, virtual inertia controllers estimate the grid frequency and its derivative using noisy and delayed measurements, which can lead to noise amplification and instabilities~\cite{m2016cdc,jpm2017cdc}.
Furthermore, inverter-based control can be significantly faster than that available for conventional generators. Therefore, using inverters to mimic generator behaviour does not take advantage of their full potential. This poses a new challenge for the control system engineer: develop control systems to regulate frequency in power systems that exploit the capabilities of inverters, and that overcome the issues introduced by renewable generation, including uncertainty in supply, measurement delays, network topology changes, and heterogeneity among components.


To achieve this goal, new methods for controller synthesis are required. The crux of the issue is that in the power system context, in order to ensure secure operation, control systems must be able to guarantee \textbf{in advance} that adequate levels of robustness are maintained even if its operating point changes, and as components join and leave the grid. Given their uncertain nature, increasing the number of renewable sources vastly increases the number of ways this can happen. It then becomes very difficult to apply conventional control design methods, since one cannot determine which model to use, or identify a tractable set of operating points or network configurations to consider. This is an issue even for many specialised methods for large systems, such as those based on small gain or dissipativity theory \cite{BL+06,arcak2016networks}. This is because these still typically require the verification of the feasibility of a \gls{lmi} that scales with the size of the network, and this test would have to be rechecked for every operating point and change in network configuration.


In this paper, we argue that the best way to address the challenge of achieving robustness and scalability is `to get the local design right'. To do so, we look to follow, and further extend, the philosophy of passivity based design, and find conditions on the subsystems in the network that guarantee robust stability \textbf{independently} of how they are interconnected. These conditions can then be used as a principled basis for \textbf{scale-free} design that addresses the requirements of the network setting. In particular, by designing controllers to meet a local stability requirement, strong a-priori guarantees --that hold even as the operating point changes, and as components are added to or removed from the network-- can be given.

Our main contribution, presented as \Cref{thm:main} in \Cref{sec:res1}, is to derive a decentralised stability criterion that is tailored to frequency control problems in power systems. As described in \Cref{sec:appthm}, the condition allows stability of a full power system model to be deduced on the basis of a set tests on the individual components in the network, in a manner that is independent of operating point and interconnection configuration. The condition allows for detailed, heterogeneous components models, and can include the effect of delays. As shown in \Cref{sec:an}, the criterion is robust, and can be verified using several standard frequency response, state-space, and circuit theory analysis tools. Furthermore, as discussed in \Cref{sec:res3}, it allows for the synthesis of controllers using only local models. The design can be conducted using standard frequency response intuition, as well with off-the-shelf tools from $\Hfty{}$ optimal control. As explained in \Cref{sec:d} standard passivity based design criteria arise as a special case, and there essentially exist no better criteria that can be used as a basis for decentralised design with a-priori stability guarantees. We illustrate the results on several standard power system models and controller architectures in \Cref{sec:examples}.

\subsubsection*{Notation}

$\Hfty$ denotes the space of transfer functions of stable linear, time-invariant systems. This is the Hardy space of functions that are analytic on the open right half plane $\C_+$ with bounded norm $\norm{g\s}_\infty\coloneqq{}\sup_{s\in\C_+}\abs{g\s}$. $\discalg$ denotes the subset of $\Hfty$ that is continuous on the extended imaginary axis \cite{Par97}. $\Rat$ denotes the set of real rational functions, and $\RH\coloneqq{}\Rat\cap\Hfty$. Finally, we denote the lower \gls{lft} as $\lft{l}{G}{C}\coloneqq{}G_{11}+G_{12}C\funof{I-G_{22}C}^{-1}G_{21}$.

\section{Problem Description}

\begin{figure}[tp]
\centering
\begin{tikzpicture}[]font=\footnotesize]
  
  \node[draw, circle, inner sep=0, radius=0.2] (p) at (0,0) {$+$};
  \draw[->] (-1,0) -- node[above] {$P_d$} (p.west);
  \draw[->] (p.east) -- (1,0);
  
  \node[align=center] at (2,1.35) {Bus Dynamics
  };
  
  \draw (1,-1) rectangle (3,1);
  \draw[fill=\c] (1,1) rectangle node[midway] {$g_1$} (1.4,0.6);
  \draw[fill=\c] (1.4,0.6) rectangle (1.8,0.2);
  \draw[fill=\c] (1.8,0.2) rectangle node[midway] {$g_i$} (2.2,-0.2);
  \draw[fill=\c] (2.2,-0.2) rectangle (2.6,-0.6);
  \draw[fill=\c] (2.6,-0.6) rectangle node[midway] {$g_n$} (3,-1);
  
  \draw[->] (3,0) -- node[near end, above] {$\dot{\theta}$} (5,0);
  \draw[->] (4,0) -- (4,-2) -- (2.5,-2);
  
  \draw[fill=\c] (1.5,-2.4) rectangle node[midway] {$\cfrac{1}{s} \, L_B$} (2.5,-1.6);
  \node[align=center] at (2,-2.75) {Network};
  
  \draw[->] (1.5,-2) -- (0,-2) node[near end, below] {$P_N$} -- (p.south) node[left, yshift=-4] {$\text{--}$};
  
\end{tikzpicture}
\caption{Block diagram of the linearised power system model, where $g_i\s=\lft{l}{G_i\s}{c_i\s}$ as illustrated in \cref{fig.BusDyn}.}\label{fig.GL}
\end{figure}
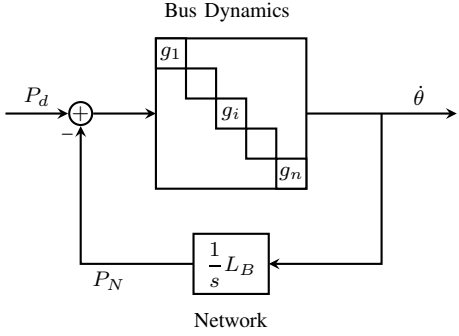

In this section we describe the power system model used in this paper. We model the power system as a set of $n$ buses, indexed by $i\in\{1,\dots,n\}$, which are coupled through an AC network. Assuming operation around an equilibrium, the linearised dynamics are represented by the block diagram in \Cref{fig.GL}. The transfer function $g_i\s$ describes the dynamics of the components connected at the \emph{i}th bus. The input to each $g_i\s$ is the net power flow into the bus, relative to its equilibrium value. This includes the variation $P_{N,i}$ in electrical power drawn from the network and an external disturbance $P_{d,i}$, which reflects, for example, variations in power drawn by local loads. The output of each $g_i\s$ is the rate of change of voltage angle (frequency) at the given bus.

The network power fluctuations $P_N$ are given by a linearised DC model of the power flow equations. More precisely, 
\begin{equation}
    \label{eq:network}
    P_N\s = \frac{1}{s}L_B\dot\theta\s
\end{equation} where $L_B$ is an undirected weighted Laplacian matrix with entries given by 
\begin{equation}\label{eq:Lap}
L_{B,ij}=\frac{\partial}{\partial{}\theta_j}{\sum_{l=1}^nV_{i0}V_{l0}b_{il}\sin\funof{\theta_i-\theta_l}}\Bigr|_{\theta=\theta_0}.
\end{equation}
In the above $V_{0}\in\R^n$ and $\theta_0\in\R^n$ denote the voltage magnitudes and angles at the buses in steady state, and $b_{il}\geq{}0$ the susceptance of the transmission line connecting buses \emph{i} and \emph{l} ($b_{il}=0$ if there is no line). 

Finally, to allow for the design of local controllers, we further open the loop at each $g_i\s$ and define a generalized plant model $G_i\s$ for each bus as
\begin{equation}\label{eq:generalized-plant}
        \begin{bmatrix}
        \dot\theta_i\s\\z_i\s
        \end{bmatrix}=
        \begin{bmatrix}
        G_{i,11}\s \!\!&\! G_{i,12}\s\\
        G_{i,21}\s \!\!&\! G_{i,22}\s
        \end{bmatrix}\!\!
        \begin{bmatrix}
        P_{d,i}\s-P_{N,i}\s\\P_{c,i}\s
        \end{bmatrix}\!.
\end{equation}
The entries of $G_i\s$ capture both the internal dynamics at the bus, and specify the measurements available for control system design. The signal $z_i\s$ specifies the measurements available for implementing the local controller, and $P_{c,i}\s$ the controller's power injection. These signals are related through
\begin{equation}\label{eq:controller}
    P_{c,i}\s=c_i\s z_i\s,
\end{equation} 
where $c_i\s$ is the transfer function of the controller to be designed. The transfer functions $g_i,G_i$ and $c_i$ are related through the lower \gls{lft} according to $g_i=\lft{l}{G_i}{c_i}$ as illustrated in \Cref{fig.BusDyn}. Note that in general $G_i$ and $c_i$ need not be scalar, though $g_i$ always is. Combining \cref{eq:network,eq:generalized-plant,eq:controller} leads to the following
generic linearised power system model:
\begin{equation}\label{eq:model}
\begin{aligned}
\begin{bmatrix}
\dot\theta_i\s\\z_i\s
\end{bmatrix}&=
G_i\s
\begin{bmatrix}
P_{d,i}\s-P_{N,i}\s\\P_{c,i}\s
\end{bmatrix},\\
P_{c,i}\s&=c_i\s{}z_i\s,\\
P_N\s&=\frac{1}{s}L_B\dot{\theta}\s.
\end{aligned}
\end{equation}

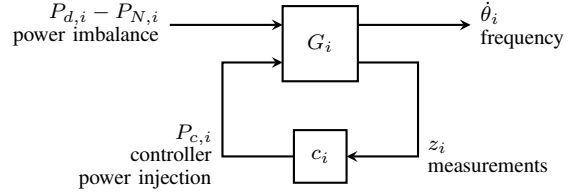
\begin{figure}[tp]
\centering
\begin{tikzpicture}[font=\footnotesize]
  
  \node[align=center] at (0.5,1.25) {};
  \node[align=center] at (0.5,-1.75) {};
  
  \draw (0,0) rectangle node[midway] {$G_i$} (1,1);
  \draw (0.15,-1.35) rectangle node[midway] {$c_i$} (.85,-0.65);
  
  \draw[->] (1,0.75) -- (2.5,0.75) node[right, align=left] {$\dot{\theta}_i$\\frequency};
  \draw[->] (1,0.25) -- (1.8,0.25) -- (1.8,-1) node[right, align=left] {$z_i$\\measurements} -- (.85,-1);
  \draw[->] (0.15,-1) -- (-.8,-1) node[left, align=right] {$P_{c,i}$  \\controller\\power injection} -- (-.8,0.25) -- (0,0.25);
  \draw[->] (-1.5,0.75) node[left, align=right] {$P_{d,i}-P_{N,i}$\\power imbalance} -- (0,0.75);
  
\end{tikzpicture}
\caption{Generalized plant description of the dynamics at the \emph{i}th bus. The transfer function from the power imbalance to frequency is $g_i=\lft{l}{G_i}{c_i}$.}\label{fig.BusDyn}
\end{figure}

Although \cref{eq:model} is rather generic and can account for many bus models, when illustrating our approach we will use models based on the classical swing equations. That is, we will consider the bus dynamics described by 
\[
m_i \ddot{\theta_i}+d_i\dot \theta_i = P_{c,i}+P_{d,i}-P_{N,i},
\]
where $m_i$ and $d_i$ are the generator's inertia and damping respectively. This leads to a generalised plant transfer function
\[
G_i\s=\begin{bmatrix}
\frac{1}{m_is+d_i}&\frac{1}{m_is+d_i}\\
G_{i,21}\s \!\!&\! G_{i,22}\s
\end{bmatrix},
\]
where the particular transfer functions $G_{i,21}\s$ and $G_{i,22}\s$ depend on the measured signal $z_i\s$. For example, if angular velocity measurements are available, then $G_{i,21}\s=G_{i,22}\s=\frac{1}{m_is+d_i}$.  

\begin{remark}
The network model in \cref{eq:model} implicitly makes the following assumptions which are standard and well-justified for frequency control in transmission networks \cite{kundur_power_1994}: (i) bus voltage magnitudes are constant for all $i$,
(ii) transmission lines are lossless, and (iii) reactive power flows do not affect bus voltage phase angles and frequencies. See, e.g., \cite{Zhao:2014bp,Li:2016tcns,mallada2017optimal} for applications of similar models for frequency control within the control literature.
\end{remark}

\section{Results}\label{sec:res}

\subsection{A Scale-Free Stability Criterion}\label{sec:res1}

%
%
%
%
%
%

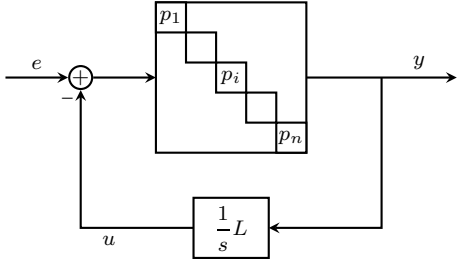
\begin{figure}
\centering
\begin{tikzpicture}[]font=\footnotesize]
  
  \node[draw, circle, inner sep=0, radius=0.2] (p) at (0,0) {$+$};
  \draw[->] (-1,0) -- node[above] {$e$} (p.west);
  \draw[->] (p.east) -- (1,0);
  
  
  \draw (1,-1) rectangle (3,1);
  \draw[fill=\c] (1,1) rectangle node[midway] {$p_1$} (1.4,0.6);
  \draw[fill=\c] (1.4,0.6) rectangle (1.8,0.2);
  \draw[fill=\c] (1.8,0.2) rectangle node[midway] {$p_i$} (2.2,-0.2);
  \draw[fill=\c] (2.2,-0.2) rectangle (2.6,-0.6);
  \draw[fill=\c] (2.6,-0.6) rectangle node[midway] {$p_n$} (3,-1);
  
  \draw[->] (3,0) -- node[near end, above] {$y$} (5,0);
  \draw[->] (4,0) -- (4,-2) -- (2.5,-2);
  
  \draw[fill=\c] (1.5,-2.4) rectangle node[midway] {$\cfrac{1}{s} \, L$} (2.5,-1.6);
  
  \draw[->] (1.5,-2) -- (0,-2) node[near end, below] {$u$} -- (p.south) node[left, yshift=-4] {$\text{--}$};
  
\end{tikzpicture}
\caption{\Cref{thm:main} shows that given any $h\in\PR\cap\discalg{}$, stability of this feedback interconnection is guaranteed for all $p_i\in\mathcal{P}_h$ and all $L\in\mathcal{L}$.}\label{fig.GLP}
\end{figure}

In this section we will present a scale-free stability criterion for the feedback interconnection
\begin{equation}\label{eq:seceq}
    \begin{aligned}
    y_i\s&=p_i\s{}\funof{e_i\s-u_i\s}\\
    u\s&=\frac{1}{s}Ly\s.
    \end{aligned}
\end{equation}
This interconnection is illustrated in \Cref{fig.GLP}. In particular we will show that given any $L$ in the set
\[
\mathcal{L}\coloneqq{}\cfunof{L:L=L^T,0\preceq{}L\preceq{}I},
\]
stability\footnote{We say the interconnection is stable if
\[
\begin{bmatrix}
P\s\\I
\end{bmatrix}\funof{I+\tfrac{1}{s}LP\s{}}^{-1}\begin{bmatrix}
\tfrac{1}{s}L&I
\end{bmatrix}\in\Hfty{}^{2n\times{}2n},
\]
where $P\s=\diag\funof{p_1\s,\ldots{}p_n\s}$.
} of \cref{eq:seceq} can be guaranteed on the basis of decentralised tests on each of the transfer functions $p_i\s$. We will show how to use this to guarantee stability of the linearised power system model in the next section.

Our criterion is written in terms of \gls{pr} and \gls{espr} functions. This establishes strong connections to many well established areas of control theory, including:
\begin{enumerate}
    \item Multiplier methods and absolute stability criteria;
    \item $\Hfty$ optimal control;
    \item The Nyquist stability criterion;
    \item Classical circuit theory.
\end{enumerate}
We will highlight these connections throughout the rest of the paper. We now formally define these function classes.

\begin{definition}\label{def:pr}
A  (not necessarily proper or rational) transfer functions $g\s$ is \gls{pr} if:
\begin{enumerate}[(i)]
\item $g\s$ is analytic in $\mathrm{Re}\funof{s}>0$;
\item $g\s$ is real for all positive real $s$;
\item $\mathrm{Re}\funof{g\s}\geq{}0$ for all $\mathrm{Re}\funof{s}>0$.
\end{enumerate}
If in addition $g\in\discalg$ and there exists an $\epsilon>0$ such that $g\funof{s}-\epsilon$ is \gls{pr}, then $g\s$ is \gls{espr}.
\end{definition}

The following theorem, which is inspired by the results for scalar systems from \cite[Theorem 2]{BW65}, shows that provided $L\in\mathcal{L}$ and that the elements in the diagonal transfer function are drawn from a parametrised class
\[
\mathcal{P}_h\coloneqq{}\cfunof{p\in\Hfty:p\funof{0}\neq{}0,h\s\funof{1+\frac{p\s}{s}}\in\ESPR},
\]
then the feedback interconnection in \cref{eq:seceq} is stable.

\begin{theorem}\label{thm:main}
If $h\in\PR\cap\discalg{}$, then for any $p_1,\ldots{},p_n\in\mathcal{P}_h$ and any $L\in\mathcal{L}$, the feedback interconnection in \cref{eq:seceq} is stable.
\end{theorem}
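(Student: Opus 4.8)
The plan is to verify the $\Hfty$ membership in the footnote directly, by exhibiting every possible closed-loop pole and ruling it out. Writing $P\funof{s}=\diag\funof{p_1\funof{s},\ldots,p_n\funof{s}}$ and using the identity $\funof{I+\tfrac1s LP}^{-1}=s\funof{sI+LP}^{-1}$, the matrix defining stability becomes
\[
M\funof{s}=\begin{bmatrix}P\funof{sI+LP}^{-1}L & sP\funof{sI+LP}^{-1}\\[2pt] \funof{sI+LP}^{-1}L & s\funof{sI+LP}^{-1}\end{bmatrix}.
\]
Since each $p_i\in\Hfty$ is analytic on $\C_+$, the entries of $M$ are analytic there away from the zeros of $\det\funof{sI+LP\funof{s}}$, and $\tfrac1s LP\funof{s}\to0$ as $s\to\infty$ gives well-posedness and properness. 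Thus it suffices to show three things: (a) $\det\funof{sI+LP\funof{s}}\neq0$ on $\overline{\C_+}\setminus\cfunof{0}$; (b) the resulting singularity of $M$ at $s=0$ is removable; and (c) $M$ is bounded up to the imaginary axis. The heart of the argument is (a).

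For (a), suppose toward a contradiction that $\funof{s_0I+LP\funof{s_0}}u=0$ for some $u\neq0$ and some $s_0$ with $\mathrm{Re}\funof{s_0}\geq0$, $s_0\neq0$. Set $w=P\funof{s_0}u$, so the relation reads $u=-\tfrac{1}{s_0}Lw$, and write $g_i=h\funof{1+p_i/s}\in\ESPR$ (this is exactly the defining property of $\mathcal{P}_h$). Using $L=L^{T}$ one computes $\|u\|^{2}=|s_0|^{-2}w^{*}L^{2}w$ and $w^{*}Lw=-s_0\sum_i\overline{p_i\funof{s_0}}\,|u_i|^{2}$, and substituting these into the weighted sum $\sum_i|u_i|^{2}g_i\funof{s_0}=h\funof{s_0}\sum_i|u_i|^2+\tfrac{h\funof{s_0}}{s_0}\sum_i|u_i|^2 p_i\funof{s_0}$ yields
\[
\sum_{i}|u_i|^{2}g_i\funof{s_0}=\frac{w^{*}\funof{L^{2}-L}w}{|s_0|^{2}}\,h\funof{s_0}=:\gamma\, h\funof{s_0},\qquad \gamma\in\R.
\]
Now $0\preceq L\preceq I$ forces the eigenvalues of $L$ into $[0,1]$, hence $L^{2}\preceq L$ and $\gamma\leq0$. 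On the other hand, because $g_i\in\ESPR$ there is $\epsilon>0$ with $\mathrm{Re}\funof{g_i}\geq\epsilon$ on $\C_+$, extended to the imaginary axis by continuity of $\discalg$ functions, so $\mathrm{Re}\!\left(\sum_i|u_i|^2 g_i\funof{s_0}\right)\geq\epsilon\|u\|^{2}>0$. But $h\in\PR\cap\discalg$ gives $\mathrm{Re}\funof{h\funof{s_0}}\geq0$, so $\mathrm{Re}\funof{\gamma h\funof{s_0}}=\gamma\,\mathrm{Re}\funof{h\funof{s_0}}\leq0$, a contradiction. This uses all four hypotheses at once, and is the matrix analogue of the scalar condition $\ell\funof{\ell-1}\leq0$ underlying \cite{BW65}: the sector bound on $L$ enters precisely through $L^2\preceq L$.

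The main obstacle is step (b), where the integrator's pole at $s=0$ meets a possibly rank-deficient $L$ (as for a Laplacian). Here I would exploit $p_i\funof{0}\neq0$, i.e.\ that $P\funof{0}$ is invertible, so that $\mathrm{range}\funof{LP\funof{0}}=\mathrm{range}\funof{L}$. Consequently the spectral projection $\Pi_0$ associated with the zero eigenvalue of $LP\funof{0}$ satisfies $\Pi_0 L=0$, and the $\tfrac1s$ term of the Laurent expansion of $\funof{sI+LP}^{-1}$ is annihilated on the right by the factor $L$ in the left-hand blocks of $M$ and cancelled by the explicit factor $s$ in the right-hand blocks. All four blocks therefore remain bounded as $s\to0$, the isolated singularity is removable, and together with (a) and the properness at infinity this establishes $M\in\Hfty^{2n\times2n}$, completing the proof. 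I expect the careful justification of this cancellation (controlling the zero eigenvalue of $LP\funof{0}$ and its semisimplicity) to be the most technical part of the argument.
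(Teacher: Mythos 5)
Your step (a) is correct, and it is genuinely different from the paper's argument: the paper first reduces stability to the single block $\tfrac1sL(I+\tfrac1sPL)^{-1}$, factorizes $L=QXQ^*$ with $Q^*Q=I$ and $\epsilon I\preceq X\preceq I$, and then combines a convex-hull eigenvalue containment theorem with a separating-hyperplane and multiplier manipulation in the style of Brockett--Willems; you instead rule out closed-half-plane singularities pointwise through the identity $\sum_i|u_i|^2g_i(s_0)=\tfrac{1}{|s_0|^2}\,w^*(L^2-L)w\,h(s_0)$ together with $L^2\preceq L$. That computation checks out, is arguably more elementary, and uses each hypothesis transparently.

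The genuine gap is your step (b), in two places. First, the Laurent expansion of $(sI+LP(s))^{-1}$ at $s=0$ is not a legitimate object here: the $p_i$ are only in $\Hfty$, hence analytic on the open right half plane and at best continuous at the origin (which is what $p_i(0)\neq0$ implicitly presumes); a general $\Hfty$ function need not extend analytically to any neighbourhood of $0$, so the residue/spectral-projection calculus must be replaced by a perturbation estimate, and the naive bound $\|(sI+LP(s))^{-1}\|\lesssim|s|^{-2}$ that a quantitative version of your step (a) delivers is one power of $s$ too weak to make $s(sI+LP)^{-1}$ bounded. Second, your derivation of $\Pi_0L=0$ silently assumes that the zero eigenvalue of $LP(0)$ is semisimple, so that $\ker\Pi_0=\mathrm{range}(LP(0))=\mathrm{range}(L)$; invertibility of $P(0)$, which is all you invoke, does not give this. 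Concretely, with $L=\tfrac12\bigl(\begin{smallmatrix}1&-1\\-1&1\end{smallmatrix}\bigr)\in\mathcal{L}$ and $p_1(0)=1$, $p_2(0)=-1$, the matrix $LP(0)$ is nonzero and nilpotent, $\det(sI+LP(s))$ vanishes to second order at $s=0$, and $s(sI+LP)^{-1}$ blows up. What rescues semisimplicity is the ESPR hypothesis evaluated at $s=0$: it forces $h(0)=0$ and all $p_i(0)$ into a common open half plane $\{z:\mathrm{Re}(cz)>0\}$, whence $(LP(0))^2x=0$ gives $(Ly)^*P(0)(Ly)=0$ with $y=P(0)x$ and therefore $Ly=0$; but you never bring that hypothesis into step (b), so the step fails as written. (Step (c) is likewise asserted rather than proved; for non-rational data, pointwise nonsingularity on the closed half plane does not yield the uniform bound that $\Hfty$ membership requires, although here the quantitative form of (a) does fill this in away from the origin.) The cleanest repair is precisely the paper's device that your proposal lacks: writing $L=QXQ^*$ and pushing $Q$ through the inverse gives $(sI+LP)^{-1}L=Q(sI+XQ^*PQ)^{-1}XQ^*$ and $s(sI+LP)^{-1}=I-Q(sI+XQ^*PQ)^{-1}XQ^*P$, and $XQ^*P(0)Q$ is invertible by the same half-plane argument, so the origin then poses no difficulty at all.
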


\begin{remark}
The function $h\s$ in \Cref{thm:main} is typically referred to as a multiplier. A useful class of multipliers that we will use in all our examples is given by
\[
\cfunof{\frac{s}{s+T}\prod_{k=1}^N\frac{s+\alpha_k}{s+\beta_k}:0<\beta_1<\alpha_1<\beta_2<\ldots{}<T}.
\]
There is an extensive literature supporting the design of multipliers \cite{BW65}, and (as we will discuss in \Cref{sec:fr}) the choice of $h\s$ has a graphical interpretation. Nonlinear extensions of \Cref{thm:main} are also possible, using for example the Popov or Zames-Falb multipliers, though this will not be pursued here (see \cite{PV17} for ideas along these lines).
\end{remark}

\begin{proof}
Let $P=\diag\funof{p_1,\ldots{},p_n}$. Since $P\in\Hfty^{n\times{}n}$, the interconnection of $P$ and $\frac{1}{s}L$ is stable if and only if
\[
\tfrac{1}{s}L\funof{I+\tfrac{1}{s}PL}^{-1}\in\Hfty^{n\times{}n}.
\]
Since $L\in\mathcal{L}$, we can factorize it as $L=QXQ^*$, where $\epsilon{}I\preceq{}X\preceq{}I$, $Q\in\C^{n\times{}\funof{n-m}}$, $m>0$, $Q^*Q=I$, $\epsilon>0$. 
Hence 
\[
\begin{aligned}
\tfrac{1}{s}L\funof{I+\tfrac{1}{s}PL}^{-1}&=QXQ^*\funof{sI+PQXQ^*}^{-1},\\
&=QX\funof{sI+Q^*PQX}^{-1}Q^*.
\end{aligned}
\]
Clearly then it is sufficient to show that
\begin{equation}\label{eq:intintint1}
\funof{sI+Q^*PQX}^{-1}\in\Hfty^{\funof{n-m}\times{}\funof{n-m}}.
\end{equation}
The above can be immediately recognised as an eigenvalue condition: $-s\notin\lambda\funof{Q^*P\s{}QX},\forall{}s\in\overline{\mathbb{C}}_+$. By Theorem 1.7.6 of \cite{HJ91}, for any $s\in\C$:
\[
\lambda\funof{Q^*P\s{}QX}\subset\text{Co}\funof{kp_i\s:i\in \cfunof{1,\ldots{},n},\epsilon\leq{}k\leq{}1}.
\]
Therefore it is sufficient to show that
\begin{equation}\label{eq:convsets}
0\notin\text{Co}\funof{s+kp_i\s:i\in \cfunof{1,\ldots{},n},\epsilon\leq{}k\leq{}1},
\end{equation}
for all $s=\overline{\mathbb{C}}_+$. Observe that since each $p_i\s$ is bounded, this condition is trivially satisfied for large $s$. It is therefore enough to check that this holds for $s\in\overline{\mathbb{C}}_+,\abs{s}<R,$ for sufficiently large $R$. This can be done using the separating hyperplane theorem, applied pointwise in $s$. In particular, \cref{eq:convsets} holds for any given $s$ if and only if there exists a nonzero $\alpha\in\C$ and $\gamma>0$ such that $\forall{}i\in\cfunof{i,\ldots{},n}$:
\begin{equation}\label{eq:prtest}
\text{Re}\funof
{\alpha{}\funof{s+kp_i\s}}\geq{}\gamma{},\forall{}\;\epsilon{}\leq{}k\leq{}1.
\end{equation}
We will now use a minor adaptation of the argument in Theorem 2 of \cite{BW65} to show that such an $\alpha{}$ is guaranteed to exist. From the conditions of the theorem and the maximum modulus principle, for any $R\geq{}0$, there exists a $\delta>0$ such that $\forall{s}\in\overline{\C}_+,\abs{s}\leq{}R$: 
\begin{equation}\label{eq:finaltest}
\text{Re}\funof{h\s\funof{1+p_i\s/s}}\geq{}\delta.
\end{equation}
Since $h\s$ is \gls{pr}, for all $k^*\geq{}0$, $\text{Re}\funof{k^*h\s}\geq{}0$, and therefore
\[
\text{Re}\funof{h\s\funof{1+p_i\s/s}+k^*h\s}\geq\delta{}.
\]
Dividing through by $\funof{1+k^*}$ and rearranging shows that under these conditions
\[
\text{Re}\funof{\tfrac{h\s}{s}\funof{s+\funof{1+k^*}^{-1}p_i\s}}\geq{}\funof{1+k^*}^{-1}\delta{}.
\]
Therefore setting $\alpha\equiv{}h\s/s$ and $\gamma\equiv\epsilon{}\delta{}$ shows that \cref{eq:prtest} is satisfied for the required values of $k$ and $s$. Consequently \cref{eq:intintint1} is satisfied, and the result follows.
\end{proof}

\subsection{Applying \Cref{thm:main} to Linearised Power System Models}\label{sec:appthm}

\begin{figure}[tp]
\centering
\begin{tikzpicture}[scale=0.75,font=\footnotesize]
  
  \draw (-0.25,0) rectangle node[midway,yshift=1.5] {$\Gamma^{\text{-}\frac{1}{2}}$} (0.75,1);
  \node[draw, circle, inner sep=0, radius=0.2] (p) at (1.75,0.5) {$+$};
  \draw (2.5,0) rectangle node[midway,yshift=1.5] {$\Gamma^{\frac{1}{2}}$} (3.5,1);
  \draw (4.25,0) rectangle node[midway] {$G$} (5.25,1);
  \draw (6,0) rectangle node[midway,yshift=1.5] {$\Gamma^{\frac{1}{2}}$} (7,1);
  \draw (8,0) rectangle node[midway,yshift=1.5] {$\Gamma^{\text{-}\frac{1}{2}}$} (9,1);
  
  \draw (2.5,-1.5) rectangle node[midway,yshift=1.5] {$\Gamma^{\text{-}\frac{1}{2}}$} (3.5,-0.5);
  \draw (4.25,-1.5) rectangle node[midway] {$\frac{1}{s}L_B$} (5.25,-0.5);
  \draw (6,-1.5) rectangle node[midway,yshift=1.5] {$\Gamma^{\text{-}\frac{1}{2}}$} (7,-0.5);
  
  \draw[->] (-1,0.5) -- node[above, pos=0.3] {$P_d$} (-0.25,0.5);
  \draw[->] (0.75,0.5) -- node[pos=0.4,above] {$e$} (p.west);
  \draw[->] (p.east) -- (2.5,0.5);
  \draw[->] (3.5,0.5) -- (4.25,0.5);
  \draw[->] (5.25,0.5) -- (6,0.5);
  \draw[->] (7,0.5) -- (8,0.5);
  \draw[->] (9,0.5) -- node[above, pos=0.5] {$\dot{\theta}$} (9.75,0.5);
  
  \draw[->] (7.5,0.5) -- (7.5,-1) node[right] {$y$} -- (7,-1);
  \draw[<-] (5.25,-1) -- (6,-1);
  \draw[<-] (3.5,-1) -- (4.25,-1);
  \draw[<-] (p.south) node[left, yshift=-4] {$\text{--}$} -- (1.75,-1) node[left] {$u$} -- (2.5,-1);
  
  \draw[decorate, decoration={brace, amplitude=5pt, mirror}] 
      (2.5,-1.75) -- (7,-1.75);
  \draw[decorate, decoration={brace, amplitude=5pt}] 
      (2.5,1.25) -- (7,1.25);
      
  \node (pu) at (4.75,1.75) {$\equiv\text{diag}\funof{p_1,\ldots{},p_n},\;p_i\in\mathcal{P}_h$};
  \node (pb) at (4.75,-2.25) {$\equiv\frac{1}{s}L,\;L\in\mathcal{L}$};
  
\end{tikzpicture}
\caption{Loop transformation used to apply \Cref{thm:main} to the power system model in \cref{eq:model}, where $G=\text{diag}\funof{g_1,\ldots{},g_n}$ (c.f. \Cref{fig.GL,fig.GLP}).}\label{fig:LoopTF}
\end{figure}

In this section we will show that a set of decentralised conditions can be used to guarantee stability of the full linearised power system model in \cref{eq:model}. These guarantees are valid for every operating point that satisfies the following mild assumption.
\begin{assumption}\label{ass:1}
At equilibrium, the angle difference $\abs{\theta_{0,i}-\theta_{0,j}}$ across each transmission line is less than \ang{90}, and the voltage magnitude at each bus is at most $V_{\max,i}$.
\end{assumption}
This assumption is essentially without loss of generality, since thermal and voltage drop limitations for transmission lines preclude load angles anywhere near \ang{90} and equilibrium bus voltages above 1.05 p.u.  \cite{kundur_power_1994}.

We will now show that given any $h\in\PR\cap\discalg{}$, the power system model in \cref{eq:model} is guaranteed to be stable if every bus model satisfies
\begin{equation}\label{eq:basic}
\gamma_i\lft{l}{G_i}{c_i}\in\mathcal{P}_h,
\end{equation}
where
\begin{equation}\label{eq:gam1}
\gamma_i\coloneqq{}2\sum_{j=1}^nV_{\max,i}V_{\max,j}b_{ij}.
\end{equation}
Note that $\gamma_i$ is a constant that depends only on the susceptances of the transmission lines connected to the \emph{i}th bus and the largest allowable voltage magnitudes at their endpoints. Therefore this condition is local, independent of the operating point, and guarantees stability even as the components are connected and disconnected from the buses. This makes \cref{eq:basic} an ideal basis for conducting scale-free design. 

In order to verify stability of the power system model using \Cref{thm:main}, we need to connect \cref{eq:model,eq:seceq}. As can be seen from \Cref{fig.GL,fig.BusDyn}, by closing all the local control loops the interconnection in \cref{eq:model} simplifies to
\begin{equation}\label{eq:contclosed}
    \begin{aligned}
    \dot{\theta}_i\s&=\lft{l}{G_i\s}{c_i\s}\funof{P_{d,i}\s-P_{N,i}\s}\\
    P_N\s&=\frac{1}{s}L_B\dot{\theta}\s.
    \end{aligned}
\end{equation}
This feedback configuration has the same form as \cref{eq:seceq} (compare \Cref{fig.GL,fig.GLP}), however \Cref{thm:main} cannot yet be applied since $L_B$ is not necessarily in $\mathcal{L}$. The following simple lemma, which is proved in \Cref{app:lem1}, shows that we can rescale \cref{eq:contclosed} so that it is of the appropriate form.
\begin{lemma}\label{lem:rescale}
Suppose that $L_B$ as given by \cref{eq:Lap} satisfies \Cref{ass:1}, and let $\Gamma=\diag\funof{\gamma_1,\ldots{},\gamma_n}$, where the $\gamma_i$'s are given by \cref{eq:gam1}. Then given any conformal partitioning of $\Gamma$ and $L_B$ such that
\[
\Gamma=\begin{bmatrix}
\Gamma_1&0\\0&\Gamma_2
\end{bmatrix},\,L_B=\begin{bmatrix}
L_{B,11}&L_{B,11}\\
L_{B,21}&L_{B,22}
\end{bmatrix},
\]
$0\preceq{}\Gamma_1^{-\frac{1}{2}}\funof{L_{B,11}-L_{B,12}L_{B,22}^{-1}L_{B,21}}\Gamma_1^{-\frac{1}{2}}\preceq{}I$.
\end{lemma}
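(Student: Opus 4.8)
The plan is to reduce the claimed two-sided bound to a Loewner-order statement on the Schur complement itself. Writing $S := L_{B,11} - L_{B,12}L_{B,22}^{-1}L_{B,21}$, and noting that the congruence $X \mapsto \Gamma_1^{-1/2} X \Gamma_1^{-1/2}$ by the invertible matrix $\Gamma_1^{-1/2}$ preserves the semidefinite order, the assertion $0 \preceq \Gamma_1^{-1/2} S \Gamma_1^{-1/2} \preceq I$ is equivalent to $0 \preceq S \preceq \Gamma_1$. So I would aim to prove the latter. As a preliminary I would record the structure of $L_B$: differentiating \cref{eq:Lap} shows that $L_B$ is the weighted graph Laplacian with edge weights $w_{ij} = V_{i0}V_{j0}b_{ij}\cos(\theta_{0,i}-\theta_{0,j})$, so that $L_{B,ij} = -w_{ij}$ for $i \neq j$ and $L_{B,ii} = \sum_{j \neq i} w_{ij}$. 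Under \Cref{ass:1} the angle differences are below \ang{90}, so every cosine is positive and, since $b_{ij} \geq 0$, every $w_{ij} \geq 0$; combining $\cos(\cdot) \leq 1$ with $V_{i0} \leq V_{\max,i}$ then yields the scalar bound $L_{B,ii} \leq \tfrac{1}{2}\gamma_i$.

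The key intermediate step is the global sandwich $0 \preceq L_B \preceq \Gamma$. The left inequality is immediate from the Laplacian quadratic form $x^T L_B x = \tfrac{1}{2}\sum_{i,j} w_{ij}(x_i - x_j)^2 \geq 0$. For the right inequality I would apply $(x_i - x_j)^2 \leq 2(x_i^2 + x_j^2)$ and exploit the symmetry $w_{ij}=w_{ji}$ to obtain $x^T L_B x \leq 2\sum_i L_{B,ii}\, x_i^2 \leq \sum_i \gamma_i x_i^2 = x^T \Gamma x$. This is precisely where the factor $2$ in the definition \cref{eq:gam1} of $\gamma_i$ is consumed, and it is the only point at which the particular constants $\gamma_i$ enter.

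It then remains to transfer the global bound to the Schur complement. Restricting the inequality $L_B \preceq \Gamma$ to vectors of the form $(x,0)$ supported on the block-$1$ indices gives $x^T L_{B,11} x \leq x^T \Gamma_1 x$, i.e. $L_{B,11} \preceq \Gamma_1$. Moreover, since $L_{B,22} \succ 0$ (a principal submatrix of the positive-semidefinite Laplacian $L_B$ is positive definite whenever each connected component of the network contains a block-$1$ bus, which is what makes the partition admissible), the correction term $L_{B,12}L_{B,22}^{-1}L_{B,21}$ is positive semidefinite, so $S \preceq L_{B,11} \preceq \Gamma_1$; the same standard Schur-complement fact applied to $L_B \succeq 0$ gives $S \succeq 0$. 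Hence $0 \preceq S \preceq \Gamma_1$, and the congruence by $\Gamma_1^{-1/2}$ completes the argument. (Equivalently, both bounds follow at once from the variational identity $x^T S x = \min_y (x,y)\, L_B\, (x,y)^T$, which lies between $0$ and $x^T\Gamma_1 x$ using $0 \preceq L_B \preceq \Gamma$ and the block-diagonal form of $\Gamma$.)

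I expect the main obstacle to be the passage from the global sandwich to the Schur complement, and in particular pinning down the invertibility/definiteness of $L_{B,22}$ that the Schur complement silently requires; the estimate $L_B \preceq \Gamma$ is the other delicate point, since it is the sole place the crude bound $(x_i-x_j)^2 \leq 2(x_i^2+x_j^2)$ and the doubled constants $\gamma_i$ are needed to close the inequality.
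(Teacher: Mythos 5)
Your proof is correct, and it follows the same two-step skeleton as the paper's: first establish the global sandwich $0 \preceq \Gamma^{-1/2} L_B \Gamma^{-1/2} \preceq I$ (equivalently, $0 \preceq L_B \preceq \Gamma$), then argue that taking the Schur complement preserves this sandwich. Where you differ is in how each step is discharged. The paper handles the first step by appealing to ``standard arguments (using e.g.\ Gershgorin discs)'' and the second by citing the eigenvalue-interlacing theorem for Schur complements of Hermitian matrices \cite[Theorem 5]{Smi92}, giving no further details. You instead prove both steps from scratch: the sandwich via the Laplacian quadratic form together with $(x_i - x_j)^2 \leq 2(x_i^2 + x_j^2)$ (which is exactly where the factor $2$ in \cref{eq:gam1} is consumed, and which carries the same diagonal-dominance content as the Gershgorin argument), and the Schur-complement step via the elementary monotonicity facts $0 \preceq S$ and $S \preceq L_{B,11} \preceq \Gamma_1$, or equivalently the variational identity $x^T S x = \min_y \, (x,y) L_B (x,y)^T$. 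Your route buys self-containedness, and you also make explicit the positive definiteness of $L_{B,22}$ (each connected component of the network must contain a retained bus) that the Schur complement, and hence also the paper's citation, silently requires; the paper's route is shorter but leans entirely on the two external results. The cited interlacing theorem delivers more than is needed (all eigenvalues of the reduced matrix interlace those of the original), whereas your elementary argument yields precisely the two-sided Loewner bound the lemma asserts.
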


The most basic consequence of \Cref{lem:rescale} is that given any operating point satisfying \Cref{ass:1},
\[
\Gamma^{-\frac{1}{2}}L_B\Gamma^{-\frac{1}{2}}\in\mathcal{L}.
\]
This suggests that in order to rescale \cref{eq:contclosed} so that \Cref{thm:main} can be applied, we should use the loop transform in \Cref{fig:LoopTF}. This shows that stability of \cref{eq:contclosed} is equivalent to that of
\begin{equation}
    \begin{aligned}
    y_i\s&=\gamma_i{}\lft{l}{G_i\s}{c_i\s}\funof{e_i\s-u_i\s}\\
    u\s&=\frac{1}{s}\Gamma^{-\frac{1}{2}}L_B\Gamma^{-\frac{1}{2}}y\s.
    \end{aligned}
\end{equation}
In the above the signals $y,u,e$ are re-scaled versions of $\dot{\theta},P_N$ and $P_{d}$. \Cref{thm:main} can now be applied by setting
\[
p_i\s\equiv{}\gamma_i{}\lft{l}{G_i\s}{c_i\s}\,\text{and}\,L\equiv{}\Gamma^{-\frac{1}{2}}L_B\Gamma^{-\frac{1}{2}}.
\]
This proves that \cref{eq:basic} is sufficient for stability of \cref{eq:model} for every operating point meeting \Cref{ass:1}. Therefore all that remains is to show that these claims hold even as components are disconnected from the buses. Suppose for now that we disconnect the components at the \emph{\funof{n-m}--$n$}th buses. These buses are now `floating', and may be eliminated using Kron reduction in the usual way. If this is done we obtain the following `reduced' version of \cref{eq:contclosed}:
\[
    \begin{aligned}
    \dot{\theta}_i\s&=\lft{l}{G_i\s}{c_i\s}\funof{P_{d,i}\s-P_{N,i}\s}\\
    P_N\s&=\frac{1}{s}\funof{L_{B,11}-L_{B,12}L_{B,22}^{-1}L_{B,21}}\dot{\theta}\s,
    \end{aligned}
\]
where $L_{B,22}\in\R^{m\times{}m}$. \Cref{lem:rescale} shows that \textbf{exactly the same} loop transform will also re-scale the reduced model so that \Cref{thm:main} can be applied. Therefore satisfying \cref{eq:basic} also implies stability when these components are removed. By simply re-indexing the buses, the same argument can be used to show that \cref{eq:basic} also implies stability even as any combination of components are removed.

\begin{remark}
Stability as we have defined it implies that if the external signals (the disturbances $P_d$) are bounded and tend to zero, then the internal signals $P_N,\dot{\theta}$ will tend to zero. This does not necessarily mean that the `state variables' $\theta$ will tend to their equilibrium values $\theta_0$, since they do not appear explicitly in the internal signals. However, since
\[
P_N=L_B\funof{\theta-\theta_0},
\]
it is clear that if $\lim_{t\rightarrow{}\infty}P_N\tm=0$, then $\lim_{t\rightarrow{}\infty}\theta\tm-\theta_0\in\text{Ker}\funof{L_B}$. Therefore because $L_B$ is a weighted Laplacian matrix, satisfying \cref{eq:basic} ensures that the phases differences (and hence power flows) across the transmission lines  will return to their equilibrium values.
\end{remark}

\subsection{A Scale-Free Analysis Method}\label{sec:an}

\Cref{thm:main} shows that given a function $h\in\PR\cap\discalg$, stability can be guaranteed on a component by component basis using \cref{eq:basic}. The true strength of this result is that it can be used to design controllers based only on local models with a-priori guarantees that hold independently of operating point and network configuration. However before considering synthesis questions, it is first instructive to understand how to check \cref{eq:basic}. 

Rather than simply checking that \cref{eq:basic} holds, instead we propose to find the largest $\gamma$ such that $\gamma\lft{l}{G_i}{c_i}\in\mathcal{P}_h$. This is justified by the following lemma, and useful because it will give our criteria robustness guarantees. It will also provide a synthesis objective as discussed in \Cref{sec:res3}. The proof is given in \Cref{app:1}.

\begin{lemma}\label{lem:marg}
Let $h\in\PR$ and $p\in\mathcal{P}_h$. If $0<\gamma\leq{}1$, then $\gamma{}p\in\mathcal{P}_h$.
\end{lemma}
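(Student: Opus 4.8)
The plan is to check the three defining requirements of $\mathcal{P}_h$ for $\gamma p$ directly. Since $\gamma$ is a positive constant, $\gamma p\in\Hfty$ and $\funof{\gamma p}\funof{0}=\gamma p\funof{0}\neq 0$ are immediate, so all the content lies in showing that $g\s\coloneqq{}h\s\funof{1+\gamma p\s/s}\in\ESPR$. Writing $f\s\coloneqq{}h\s\funof{1+p\s/s}$ for the function that is assumed (because $p\in\mathcal{P}_h$) to lie in $\ESPR$, the identity I would build everything on is
\[
h\s\funof{1+\tfrac{\gamma p\s}{s}}=\funof{1-\gamma}h\s+\gamma f\s,
\]
which exhibits $g$ as a convex combination of $h$ and $f$ with weights $1-\gamma\geq 0$ and $\gamma>0$, the nonnegativity being exactly where the hypothesis $0<\gamma\leq 1$ enters.

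Given this identity the \gls{pr} half of the argument is the easy step. Because $f\in\ESPR$ there is an $\epsilon>0$ with $f-\epsilon$ \gls{pr}, so \Cref{def:pr}(iii) gives $\mathrm{Re}\funof{f\s}\geq\epsilon$ on $\mathrm{Re}\funof{s}>0$, while $h\in\PR$ gives $\mathrm{Re}\funof{h\s}\geq 0$ there. I would then rewrite the identity as $g\s-\gamma\epsilon=\funof{1-\gamma}h\s+\gamma\funof{f\s-\epsilon}$, a sum of a nonnegative multiple of the \gls{pr} function $h$ and the \gls{pr} function $f-\epsilon$. Since the \gls{pr} functions form a convex cone (closed under addition and under multiplication by nonnegative scalars), $g-\gamma\epsilon$ is \gls{pr}: analyticity in the open right half plane and realness on the positive real axis pass through the linear combination, and the real part is bounded below by $\funof{1-\gamma}\cdot 0+\gamma\epsilon=\gamma\epsilon>0$. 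This is precisely the strict shift that the \gls{espr} definition requires.

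The step I expect to be the genuine obstacle is the remaining \gls{espr} requirement $g\in\discalg$, i.e. that $g$ is bounded and continuous out to the extended imaginary axis; the real-part bound above only controls the open right half plane and says nothing about poles on the imaginary axis. Here I would use that $f\in\discalg$ by hypothesis together with the imaginary-axis regularity of $h$ available in our setting ($h\in\PR\cap\discalg$, as in \Cref{thm:main}): as $\discalg$ is a vector space, the linear combination $\funof{1-\gamma}h+\gamma f$ inherits membership in $\discalg$. The point worth flagging is that this really does rest on $h$ itself being free of imaginary-axis poles, rather than on cancellation, since the factor $1+\gamma p\s/s$ can annihilate a pole of $h$ against a zero of $1+p\s/s$ only when $\gamma=1$; for $\gamma<1$ no such cancellation occurs, so continuity of $g$ cannot be recovered from that of $f$ alone. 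Once $g\in\discalg$ is secured, combining it with the \gls{pr} property of $g-\gamma\epsilon$ gives $g\in\ESPR$, and hence $\gamma p\in\mathcal{P}_h$.
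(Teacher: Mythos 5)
Your proof is correct, and its core step is exactly the paper's: the paper also realises $h\s\funof{1+\gamma p\s/s}-\gamma\epsilon$ as a conic combination of the \gls{pr} functions $h\s$ and $h\s\funof{1+p\s/s}-\epsilon$ (it adds $\tfrac{1-\gamma}{\gamma}h\s$ and rescales by $\gamma$, which is your convex-combination identity up to the positive factor $\gamma$). The difference is that the paper's proof stops there: it never verifies the $\discalg$ membership demanded by the \gls{espr} definition, which is precisely the step you single out as the genuine obstacle. Your caution is warranted, and under the lemma's literal hypothesis $h\in\PR$ that step cannot in fact be repaired: take
\[
h\s=\frac{s}{s^2+1}+\frac{s}{s+1}\in\PR,\qquad p\s=\frac{1-s}{1+s}\in\Hfty,\qquad p\funof{0}=1.
\]
Then $s+p\s=\frac{s^2+1}{s+1}$, so the imaginary-axis poles of $h$ are cancelled and $h\s\funof{1+p\s/s}=\frac{s^2+s+2}{\funof{s+1}^2}\in\ESPR$, i.e.\ $p\in\mathcal{P}_h$; but for $\gamma=\tfrac{1}{2}$ the function $s+\gamma p\s$ no longer vanishes at $\pm j$, so $h\s\funof{1+\gamma p\s/s}$ retains poles on the imaginary axis and is not even in $\Hfty$, hence $\gamma p\notin\mathcal{P}_h$. (The conic-combination argument still delivers the \gls{pr} shift, since \gls{pr} functions are allowed boundary poles; it is exactly the $\discalg$ half that breaks, just as you predicted.) So the lemma implicitly requires $h\in\PR\cap\discalg$, which is the standing assumption in \Cref{thm:main} and in every application of the lemma in the paper; under that assumption your vector-space argument for $\discalg$ closes the issue, making your write-up the complete proof and the paper's the one with the missing half.
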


Based on the above, we define the following scale-free analysis problem.
\begin{problem}\label{prob:analysis}
Given $h,G_i,c_i$
\[
\begin{aligned}
\text{maximize} \quad{}&\gamma\\
\text{subject to}\quad&\gamma\lft{l}{G_i}{c_i}\in\mathcal{P}_h.
\end{aligned}
\]
\end{problem}
Denoting the solution to this problem as $\gamma_i^*$, it follows from \Cref{lem:marg} that if $\gamma_i\leq{}\gamma^*_i$, then $\gamma{}_i\lft{l}{G_i}{c_i}\in\mathcal{P}_h$ (i.e. \cref{eq:basic} is satisfied), and the difference $\gamma^*_i-\gamma_i$ gives a measure of robustness. We now summarise some techniques for solving \Cref{prob:analysis}. These both illustrate how to solve the problem, and also give insight into how the function $h\s$ should be selected.

\begin{remark}
Robustness with respect to other standard classes of uncertainty can also be guaranteed by adding more constraints to \Cref{prob:analysis}, see for example \cite{Jon01}.
\end{remark}

\subsubsection{Frequency response methods}\label{sec:fr}

Probably the simplest way to check that a function is \gls{espr} is to plot its frequency response. These methods are also the most insightful, since they give $h\s$ and \cref{eq:basic} a graphical interpretation. The required result is the following, and is proved in \Cref{app:2}.

\begin{lemma}\label{lem:fr}
Let $g\in\discalg$. Then $g\in\ESPR$ if and only if there exists an $\epsilon>0$ such that
\[
\text{Re}\funof{g\jw}\geq{}\epsilon,\;\forall\omega\in\R\cup\cfunof{\infty}.
\]
\end{lemma}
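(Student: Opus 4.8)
The plan is to prove the two implications separately, using the definition of \ESPR{} directly for the easy direction and a harmonic-function argument for the substantive one. For the forward direction, suppose $g\in\ESPR$. By definition there is an $\epsilon>0$ with $g\s-\epsilon$ \gls{pr}, so condition (iii) of \Cref{def:pr} gives $\mathrm{Re}\funof{g\s}\geq\epsilon$ for every $s$ with $\mathrm{Re}\funof{s}>0$. Since $g\in\discalg$ is continuous on the extended imaginary axis, I would let $s\to\jw$ from within $\C_+$ and invoke continuity to conclude $\mathrm{Re}\funof{g\jw}\geq\epsilon$ for all $\omega\in\R\cup\cfunof{\infty}$, which is exactly the stated frequency condition with the same $\epsilon$.

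The content is in the converse. Assuming $\mathrm{Re}\funof{g\jw}\geq\epsilon$ on the whole extended imaginary axis, I would show $\mathrm{Re}\funof{g\s}\geq\epsilon$ throughout $\C_+$; together with analyticity (inherited from $g\in\Hfty$) and realness on the positive reals, this makes $g\s-\epsilon$ \gls{pr} and hence $g\in\ESPR$. The key idea is that $\mathrm{Re}\funof{g}$ is a bounded harmonic function on $\C_+$ whose boundary values are bounded below by $\epsilon$, so a minimum principle should force the same bound in the interior. To apply the boundary minimum principle on a compact set, I would conformally map $\C_+$ onto the open unit disk $\mathbb{D}$ by the Cayley transform $w=\tfrac{s-1}{s+1}$, under which the extended imaginary axis maps onto $\partial\mathbb{D}$ and $g$ transforms into $\tilde g\funof{w}\coloneqq g\funof{\tfrac{1+w}{1-w}}$, analytic and bounded on $\mathbb{D}$. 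Because $g\in\discalg$ is continuous on the extended imaginary axis, $\tilde g$ extends continuously to the closed disk $\overline{\mathbb{D}}$; thus $\mathrm{Re}\funof{\tilde g}$ is harmonic on $\mathbb{D}$, continuous on $\overline{\mathbb{D}}$, and bounded below by $\epsilon$ on $\partial\mathbb{D}$. The minimum principle for harmonic functions then yields $\mathrm{Re}\funof{\tilde g}\geq\epsilon$ on all of $\mathbb{D}$, i.e. $\mathrm{Re}\funof{g\s}\geq\epsilon$ for all $s\in\C_+$.

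To finish, I would verify the remaining clauses of \Cref{def:pr} for $g\s-\epsilon$: condition (i) is immediate since $g\in\Hfty$ is analytic on $\C_+$, and condition (ii) holds because $g$ is a real transfer function, so $\overline{g\s}=g\funof{\overline{s}}$ makes $g$ (and hence $g-\epsilon$) real on the positive real axis. I expect the main obstacle to be the rigorous passage from the boundary bound to the interior bound: the minimum principle needs the real part to be continuous up to \emph{and at} infinity on the imaginary axis, which is precisely why the hypothesis is $g\in\discalg$ rather than merely $g\in\Hfty$---it guarantees that $\tilde g$ is continuous on the compact set $\overline{\mathbb{D}}$, so the boundary minimum is attained and equals the infimum of the boundary values. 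As an alternative to the conformal map one could argue directly on the half-plane via a Poisson-integral representation of $\mathrm{Re}\funof{g}$, but the disk formulation keeps the boundary (including the point at infinity) compact and avoids delicate decay estimates.
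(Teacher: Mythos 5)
Your proposal is correct and follows essentially the same route as the paper's proof: both map the right half-plane to the unit disk via a M\"obius/Cayley transform, use the fact that $g\in\discalg$ makes the transformed function continuous on the closed disk, and invoke the maximum/minimum principle for the harmonic function $\text{Re}\funof{g}$ to pass between boundary and interior bounds, finishing with \Cref{def:pr}. If anything, you are slightly more explicit than the paper (which compresses both directions into one sup-equality and calls the rest ``immediate''), in particular in checking the realness clause (ii) of \Cref{def:pr}.
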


\begin{figure}
    \centering
%
%
\begin{tikzpicture}

\begin{axis}[%
width=4.5cm,
height=4.5cm,
scale only axis,
xmin=-2,
xmax=1,
xmajorgrids,
ymin=-2,
ymax=1,
ymajorgrids,
major grid style={black},
title style={font=\footnotesize},xlabel style={font={\color{blue}\bfseries}},ylabel style={font=\footnotesize},ticklabel style={font=\footnotesize}
]

\addplot[area legend,opacity=.5,fill=white!80!black,draw=none,forget plot]
table[row sep=crcr] {%
x	y\\
-2	-1\\
0	1\\
1	1\\
1	-2\\
-2	-2\\
};

\addplot [color=black,solid,forget plot]
  table[row sep=crcr]{%
-2.09910785557811	-69.9553927936708\\
-2.09890280702228	-63.0686777318254\\
-2.09865065895666	-56.8580051341389\\
-2.09834060721201	-51.2568028528711\\
-2.09795937670274	-46.2050319990416\\
-2.09749066014505	-41.6485434790145\\
-2.09691443123294	-37.538497700684\\
-2.09620610519584	-33.8308412420027\\
-2.09533551434335	-30.4858348897678\\
-2.0942656601136	-27.4676280133881\\
-2.09295119633287	-24.7438747432791\\
-2.09133659102718	-22.2853878826489\\
-2.08935390656358	-20.0658269004914\\
-2.08692013080259	-18.0614167381282\\
-2.08393398645202	-16.250694516965\\
-2.08027214376525	-14.6142815664685\\
-2.07578476598755	-13.1346785036927\\
-2.07029033183475	-11.796081393581\\
-2.06356971106864	-10.5842173064785\\
-2.05535952675514	-9.48619786793882\\
-2.04534493299118	-8.49038966510115\\
-2.03315208503202	-7.58630062761797\\
-2.01834079787063	-6.76448172492174\\
-2.00039819797912	-6.01644348843112\\
-1.97873458500137	-5.33458693217964\\
-1.95268323488731	-4.71214834081465\\
-1.92150646112458	-4.14315702858136\\
-1.88441081863587	-3.62240443910097\\
-1.84057471323861	-3.1454217546789\\
-1.78919158762479	-2.7084614787941\\
-1.72953090759393	-2.30847635938773\\
-1.66101694829168	-1.94308690941956\\
-1.58332159323096	-1.61052738801655\\
-1.49646214546838	-1.30956050497635\\
-1.40088940273504	-1.03935446871059\\
-1.29754679071582	-0.79932300422223\\
-1.18788065408612	-0.588939070633224\\
-1.07378704207409	-0.40754384716353\\
-0.957491934186159	-0.25418022463716\\
-0.841377436144445	-0.127480535452816\\
-0.727781051581011	-0.0256296900718262\\
-0.618803143980879	0.0535908927425285\\
-0.516155876887326	0.112690934805796\\
-0.421076127609127	0.154316459798291\\
-0.334309613016835	0.181083718571027\\
-0.256159385122041	0.195430014468024\\
-0.186582972871076	0.199501129503228\\
-0.125319776780633	0.1950856641588\\
-0.0720321818275954	0.183599812184437\\
-0.0264471189433513	0.166123539033287\\
0.0115138858572655	0.14349044333984\\
0.0416317047016969	0.116436602680749\\
0.0633396979726199	0.0858144446819055\\
0.0757707775270669	0.0528697100663606\\
0.0779665840673004	0.0195533987822484\\
0.0693255196460348	-0.01121513852665\\
0.0503384954647456	-0.035509288671893\\
0.023542327853769	-0.0488646462016346\\
-0.00564908495411017	-0.0475870807082389\\
-0.0289980207302727	-0.0310270341951093\\
-0.0375009398120094	-0.00422277057651256\\
-0.0262105680881234	0.0210446259982145\\
-0.000458416454011624	0.0299972109311639\\
0.0220305687384473	0.0153004698968648\\
0.0212717531248203	-0.0111486274598717\\
-0.00266915520074712	-0.0213638661605371\\
-0.0192272962179607	-0.0018995913714139\\
-0.00295346629256246	0.0171001282111415\\
0.0155506076928332	0.00120343247690422\\
-0.00279769210685859	-0.0137449698804074\\
-0.00979019011392662	0.00796449234812817\\
0.0111772025977876	0.002029395756989\\
-0.00718785638829007	-0.00727710612811552\\
0.00396525687005412	0.00831504420068649\\
-0.00337278489386305	-0.00758224361051925\\
0.00500460651994915	0.00555500985494004\\
-0.006667438653148	-0.000969709016226893\\
0.00344256110731107	-0.00500189081050571\\
0.00452866760556426	0.00307295217230577\\
-0.000364776706150085	0.00491959941082933\\
-0.00232794885468293	0.00378886160256293\\
-0.00153884200575176	0.00370168321663358\\
0.00162366986563453	0.00322875773448274\\
0.00280362903811168	-0.00166005201038184\\
-0.00292195576271548	-0.000302344642883905\\
0.00258591346857855	-0.000572355951024326\\
7.41529607694962e-05	0.00238677491349946\\
-0.00112084592277855	0.00183826439956612\\
0.000350049865968089	0.00190943076249535\\
0.00123277270223861	-0.00124254572286298\\
};
\draw (axis cs:-1,.4)[->,>=stealth] arc (90:45:.4) {};
\node at (axis cs: -.7,.53) {\color{black}{\footnotesize{} $\angle{h\jw}$}};
\draw[dashed] (axis cs:-2,-1.27)  -- (axis cs:0.27,1);
\draw[fill=black] (axis cs: -1.52400729267448,	-1.39839417353359) circle (.25ex);
\node at (axis cs: -.35,	-1.39839417353359) {\color{black}{\footnotesize{} $\gamma_i\lft{l}{G_i\jw}{c_i\jw}/j\omega$}};

\end{axis}
\end{tikzpicture}%
    \caption{The black curve shows the Nyquist diagram of $\gamma_i\lft{l}{G_i\s}{c_i\s}/s$ for a particular transfer function. \Cref{eq:basic} is equivalent to checking that each point on this diagram lies in a half-plane  that passes through -1 with angle $\angle{}h\jw$. The margin by which the Nyquist diagram lies within the half-plane is also directly related to the measure of robustness. In particular if the dashed line cuts the real axis at the point $-\delta{}$, then $\gamma_i^*-\gamma_i=\gamma_i\funof{1/\delta-1}$.}
    \label{fig:PHDef}
\end{figure}
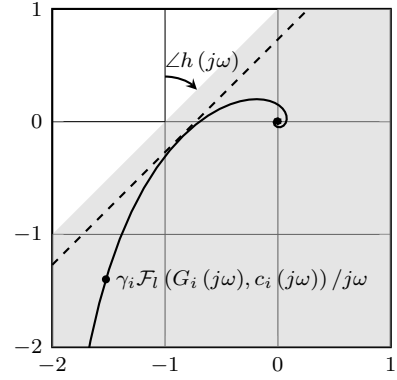
This suggests a simple frequency gridding approach for solving \Cref{prob:analysis}. In particular it shows that \Cref{prob:analysis} is equivalent to
\[
\begin{aligned}
\text{maximize}\;&\gamma\\
\text{subject to}\;&\text{Re}\funof{\!h\!\jw\!\funof{\!1+\frac{\gamma{}\lft{l}{G_i\jw}{c_i\jw}}{j\omega}}\!}\!\geq{}\epsilon,\forall\omega.
\end{aligned}
\]
This optimisation problem is easily tackled with a host of numerical methods. Perhaps more importantly the frequency domain characterization shows that the choice of $h\s$ has a graphical interpretation. To understand this, observe that for a fixed $\omega$, finding an $\epsilon>0$ such that the constraint in the above is satisfied is equivalent to checking whether
\begin{equation}\label{eq:rotplane}
\text{Re}\funof{e^{j\angle{}h\jw}\funof{1+z}}>0,
\end{equation}
where $z={\gamma{}\lft{l}{G_i\jw}{c\jw}}/{j\omega}$. This corresponds to checking whether the point $z\in\C$ lies in a half-plane which cuts through the point $-1$, and has slope $\angle{}h\jw$. This is illustrated in \Cref{fig:PHDef}. The significance of this observation is that it shows that graphical frequency domain tools, robustness measures, and intuition can be used to design both $h\s$ and the controllers $c_i\s$. This will be discussed further in \Cref{sec:res3}. It also connects \Cref{thm:main} to the results from \cite{LV06,PV12}. 

\subsubsection{State-space methods}

If we restrict ourselves to the space of real rational transfer functions, state-space techniques can also be employed. The following simple extension of the \gls{kyp} lemma is the required result. 
It shows that if we have a state-space realisation of the component model and $h$, we can solve \Cref{prob:analysis} by checking an \gls{lmi}. This proof is given in \Cref{app:3}.

\begin{lemma}\label{lem:kyp}
Let $p,h\in\Rat$, $\gamma>0$, and suppose that $p\s,\frac{h\s}{s}$ have minimal realisations
\[
p\s=\sqfunof{\begin{array}{c|c}
    A_1 & B_1 \\\hline
    C_1 & D_1
\end{array}},\;\frac{h\s}{s}=\sqfunof{\begin{array}{c|c}
    A_2 & B_2 \\\hline
    C_2 & 0
\end{array}}.
\]
The following are equivalent:
\begin{enumerate}[(i)]
    \item $\gamma{}p\in\mathcal{P}_h$.
    \item There exists an $X\succ{}0$ such that
    \[
        \begin{bmatrix}
        A^TX+XA&C^T-XB\\C-B^TX&-\funof{D+D^T}
    \end{bmatrix}\prec{}0,
    \]
    where
    \[
    A=\begin{bmatrix}
    A_1&B_1C_2\\0&A_2
    \end{bmatrix},\;B=\begin{bmatrix}
    0\\B_2
    \end{bmatrix},
    \]
    and\qquad
    $
    C=\begin{bmatrix}
    \gamma{}C_1&\gamma{}D_1C_2+C_2A_2
    \end{bmatrix},\;
    D=C_2B_2.
    $
\end{enumerate}
\end{lemma}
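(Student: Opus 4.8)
The plan is to reduce the equivalence to a single application of the classical positive-real (KYP) lemma, so that the only substantive work is to exhibit a state-space realisation of $g\s\coloneqq{}h\s\funof{1+\gamma p\s/s}$ whose data are exactly the matrices $A,B,C,D$ in the statement. By the definition of $\mathcal{P}_h$ and \Cref{lem:fr}, condition~(i) is equivalent to the conjunction of three facts: $\gamma p\in\Hfty$, $p\funof{0}\neq{}0$, and the existence of $\epsilon>0$ with $\text{Re}\funof{g\jw}\geq{}\epsilon$ for all $\omega\in\R\cup\cfunof{\infty}$. Once $g$ is realised as $\funof{A,B,C,D}$, this frequency-domain positivity is precisely what the KYP lemma trades for the \gls{lmi} in~(ii); I will then check that the remaining two conditions are encoded automatically by that same \gls{lmi}.

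The heart of the argument is the realisation of $g$. First I would write $h\s=s\cdot\tfrac{h\s}{s}$ and use $s\funof{sI-A_2}^{-1}=I+A_2\funof{sI-A_2}^{-1}$ to obtain $h\s=C_2B_2+C_2A_2\funof{sI-A_2}^{-1}B_2$; in particular $h$ and $\tfrac{h}{s}$ share the $A_2$-dynamics driven by $B_2$, and the feedthrough of $h$ is $C_2B_2=h\funof{\infty}$, matching $D$. Since $g=h+\gamma\tfrac{h}{s}\,p$, I realise $g$ by cascading the given realisations (input drives $\tfrac{h}{s}$, whose output drives $p$) and superposing the standalone $h$ onto the shared $A_2$-state. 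Reusing the single copy of the $A_2$-state gives the block-upper-triangular $A$, the input $B$, the output $C$, and $D$ of the statement. I would confirm this by expanding $C\funof{sI-A}^{-1}B+D$ with the block-triangular inverse and substituting $C_1\funof{sI-A_1}^{-1}B_1=p-D_1$ and $C_2\funof{sI-A_2}^{-1}B_2=h/s$: the $D_1$-terms cancel and, using $C_2\funof{A_2\funof{sI-A_2}^{-1}+I}B_2=h$, one recovers $g=h\funof{1+\gamma p/s}$ exactly.

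With the realisation fixed, the equivalence follows from the extended positive-real lemma: for a minimal, stable $\funof{A,B,C,D}$, the bound $\text{Re}\funof{g\jw}\geq{}\epsilon>0$ on $\R\cup\cfunof{\infty}$ holds iff there is an $X\succ{}0$ solving the displayed \gls{lmi}. The off-diagonal sign pattern is immaterial, since congruence by $\diag\funof{I,-I}$ preserves definiteness, and the block $-\funof{D+D^T}\prec{}0$ is exactly positivity of $g$ at $\omega=\infty$. It remains to thread the side conditions. For (ii)$\Rightarrow$(i): the $\funof{1,1}$ block forces $A^TX+XA\prec{}0$, so $A$ --- hence $A_1$ and $A_2$ --- is Hurwitz, giving $\gamma p\in\Hfty$ and, since $\tfrac{h}{s}$ is then analytic at the origin, $h\funof{0}=0$; evaluating $g\funof{0}=\gamma h'\funof{0}p\funof{0}$ together with $\text{Re}\funof{g\funof{0}}\geq{}\epsilon>0$ forces $p\funof{0}\neq{}0$, so $\gamma p\in\mathcal{P}_h$. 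For (i)$\Rightarrow$(ii): membership gives $\gamma p\in\Hfty$ (so $A_1$ Hurwitz) and, since $p\funof{0}\neq{}0$ would otherwise make $g$ have a pole at the origin, forces $h\funof{0}=0$; with $h\in\PR\cap\discalg$ stable this renders $A_2$ Hurwitz, so $\funof{A,B,C,D}$ is stable and the lemma applies.

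The main obstacle I anticipate is not the algebra but verifying the hypotheses under which the KYP lemma yields $X\succ{}0$: the cascade realisation $\funof{A,B,C,D}$ must be minimal (controllable and observable), yet series connections of minimal realisations can lose minimality through pole-zero cancellations, and it is minimality that upgrades a merely positive-semidefinite multiplier to a strictly positive-definite one. A second delicate point is the bookkeeping at the origin: one must establish $h\funof{0}=0$ so that $g\in\discalg$ has no imaginary-axis pole (making \Cref{lem:fr} applicable) and so that the endpoint $\omega=0$ of the frequency condition genuinely enforces $p\funof{0}\neq{}0$.
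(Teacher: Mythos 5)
Your proposal follows essentially the same route as the paper's proof: the same shared-$A_2$-state realisation of $h\s\funof{1+\gamma{}p\s/s}$ (the paper too realises the cascade $\gamma{}h\s{}p\s/s$ and $s\cdot\funof{h\s/s}$ separately and sums them on the common state), followed by an ESPR-type KYP lemma, which the paper cites as \cite[Lemma 2.3]{SKS94}. Your treatment of the side conditions is in fact more explicit than the paper's ``the result immediately follows'': deducing $\gamma{}p\in\Hfty$ from $A_1$ Hurwitz, $h\funof{0}=0$ from $A_2$ Hurwitz, and $p\funof{0}\neq{}0$ from $g\funof{0}=\gamma\funof{h/s}\funof{0}p\funof{0}$ together with $\text{Re}\funof{g\funof{0}}\geq\epsilon$ are correct steps that the paper leaves implicit.

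However, the proposal has two genuine gaps. The first is the one you flag yourself and leave unresolved: the composite realisation $\funof{A,B,C,D}$ can indeed fail to be minimal (poles of $p$ can cancel against $h\s/s$, and the parallel branch can destroy observability), so the positive-real lemma in the form you state it --- for minimal realisations --- does not apply, and the argument is incomplete at its load-bearing step. The repair is to invoke the strict KYP lemma in its controllability-free form: for strict frequency-domain inequalities over $\omega\in\R\cup\cfunof{\infty}$, a symmetric $X$ satisfying the strict \gls{lmi} exists whenever $A$ has no imaginary-axis eigenvalues, minimality playing no role; $X\succ{}0$ then follows from the $\funof{1,1}$ block and Lyapunov's theorem because $A$ is Hurwitz. (The paper's own proof is exposed to exactly the same objection, since \cite[Lemma 2.3]{SKS94} is stated for minimal realisations, so what you have identified is really the soft spot of the published argument.) The second gap is unacknowledged: in proving (i)$\Rightarrow$(ii) you conclude that $A_2$ is Hurwitz ``with $h\in\PR\cap\discalg$'', but \Cref{lem:kyp} assumes only $h\in\Rat$. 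This assumption cannot be dispensed with: taking $h\s=s/\funof{s-1}$, $\gamma=1$ and $p\s=-\funof{2s+1}/\funof{s+2}$ gives $p\in\Hfty$, $p\funof{0}=-\tfrac{1}{2}\neq{}0$ and $h\s\funof{1+p\s/s}=\funof{s+1}/\funof{s+2}\in\ESPR$, so (i) holds, yet $A_2=1$ renders the \gls{lmi} in (ii) infeasible --- closed-right-half-plane poles of $h\s/s$ can hide behind cancellations with zeros of $s+\gamma{}p\s$. So this step is unjustified from the stated hypotheses (indeed the stated equivalence is false without some such hypothesis on $h$); importing the paper's standing assumption on the multiplier is a necessary amendment to the lemma's statement, not a step of its proof.
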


Observe in particular that the \gls{lmi} in \Cref{lem:kyp} is affine in $\gamma$. This means that we may address \Cref{prob:analysis} by solving an optimisation problem of the form
\[
\begin{aligned}
\text{maximize}\quad&\gamma\\
\text{subject to}\quad&
        \begin{bmatrix}
        A^TX+XA&C^T-XB\\C-B^TX&-\funof{D+D^T}
    \end{bmatrix}\prec{}0\\
    &X\succ{}0,
\end{aligned}
\]
where $A,B,C,D,\gamma{}$ are as in \Cref{lem:kyp}(ii).

\subsubsection{Circuit theory methods}\label{sec:circtheory} The \gls{pr} functions have also been extensively studied in the context of classical circuit theory. One consequence of this was the development of algebraic tests for positive realness that can be applied to simple functions. For example, excluding the degenerate case $b_0=b_1=b_2=0$, the function
\[
\frac{a_2s^2+a_1s+a_0}{b_2s^2+b_1s+b_0}\in\PR
\]
if and only if all the coefficients are non-negative, and
\begin{equation}\label{eq:biquad}
\begin{aligned}
\funof{\sqrt{a_2b_0}-\sqrt{a_0b_2}}^2\leq{}a_1b_1.
\end{aligned}
\end{equation}
For this result, historical context, and results for other rational functions, see \cite{CS09}. Such tests give a convenient method for solving \Cref{prob:analysis} when $\lft{l}{G_i}{c_i}$ is given by a simple parametrised model. We will illustrate this in \Cref{sec:ex1}.

\subsection{A Scale-Free Design Method}\label{sec:res3}

The true strength of \Cref{thm:main} is that it can be used as a basis for decentralised design with a-priori guarantees that hold for all operating points and network configurations. In this section we will discuss both how to design the function $h\s$, and the local controllers $c_i\s$.

\subsubsection{Designing $h\s$}

The objective here is \textbf{not to design the perfect $h\s$}, rather to get a sensible starting point for designing the decentralised controllers. In \Cref{sec:fr} we saw that testing \cref{eq:basic} with respect to any given $h\s$ is equivalent to checking that the frequency responses of $\gamma_i\lft{l}{G_i\s}{c_i\s}/s$ lie in a frequency dependent half-plane. Therefore if we know roughly how these responses will look, by for example plotting their Nyquist diagrams for some nominal parameter values, we can use this graphical intuition to design a suitable function $h\s$. As illustrated in \Cref{ex:2}, this is extremely easy to do with respect to a fixed half-plane, since a half-plane can be identified directly from the Nyquist diagrams. A function that will certify \cref{eq:basic} for any set of models with Nyquist diagrams in this half-plane is then guaranteed to exist  by the following simple extension of the off-axis circle criterion \cite{CN68}, which is proved in \Cref{app:offaxis}. 

\begin{lemma}\label{lem:offaxiscirc}
Let $p_1,\ldots{},p_n\in\discalg$ and assume that $p_i\funof{0}>0$. If there exists a $\theta\in[0,\pi/2)$ such that for all $i$ 
\[
\text{Re}\funof{e^{j\theta}\funof{1+p_i\jw/j\omega}}>0,\,\forall{}\omega>0,
\]
then there exists an $h\in\PR\cap\discalg$ such that $p_1,\ldots{},p_n\in\mathcal{P}_h$.
\end{lemma}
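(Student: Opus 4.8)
The plan is to reduce the statement, via the frequency-domain characterisation in \Cref{lem:fr}, to the construction of a single positive-real multiplier, and then to build that multiplier by shaping its phase. Since $p_i\funof{0}>0\neq0$ for every $i$, membership $p_i\in\mathcal{P}_h$ is equivalent to $h\s\funof{1+p_i\s/s}\in\ESPR$, and by \Cref{lem:fr} this holds precisely when there is an $\epsilon>0$ with $\mathrm{Re}\funof{h\jw\funof{1+p_i\jw/j\omega}}\geq\epsilon$ for all $\omega\in\R\cup\cfunof{\infty}$. Writing $w_i\funof{\omega}\coloneqq1+p_i\jw/j\omega$, the hypothesis states exactly that every $w_i\funof{\omega}$ lies, for $\omega>0$, in the open half-plane $\cfunof{z:\mathrm{Re}\funof{e^{j\theta}z}>0}$, i.e. $\angle w_i\funof{\omega}\in\funof{-\pi/2-\theta,\pi/2-\theta}$. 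My goal is therefore to find an $h\in\PR\cap\discalg$ whose response $h\jw$ rotates all of these half-planes simultaneously onto the right half-plane with a uniform margin; because the $p_i$ share the single angle $\theta$, it suffices to treat the whole finite family at once.

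First I would pin down the behaviour at the two boundary frequencies, where the hypothesis does not apply directly. As $\omega\to\infty$ one has $w_i\funof{\omega}\to1$, and as $\omega\to0^+$ one has $w_i\funof{\omega}=-jp_i\funof{0}/\omega+O\funof{1}$ with $\angle w_i\to-\pi/2$. This forces two structural features on $h$: a simple zero at the origin, so that $\mathrm{Im}\funof{h\jw}/\omega$ tends to a positive limit and hence $\mathrm{Re}\funof{h\jw w_i\funof{\omega}}\to p_i\funof{0}\lim_{\omega\to0}\mathrm{Im}\funof{h\jw}/\omega>0$ (here $p_i\funof{0}>0$ is essential); and $h\funof{\infty}>0$ real, so that $\mathrm{Re}\funof{h\jw w_i\funof{\omega}}\to h\funof{\infty}>0$. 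The simple zero also gives $h\s\funof{1+p_i\s/s}=h\s+\funof{h\s/s}p_i\s\in\discalg$, which is required for \ESPR-membership and which renders $\mathrm{Re}\funof{h\jw w_i\funof{\omega}}$ continuous on the compact extended axis $[0,\infty]$; it then suffices to make this real part strictly positive at every $\omega$, since compactness promotes pointwise positivity to a uniform $\epsilon>0$.

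For the interior frequencies I would exploit that $\theta$ lies strictly inside the admissible phase window for $h$: since $\angle w_i\funof{\omega}\in\funof{-\pi/2-\theta,\pi/2-\theta}$, the target inequality $\abs{\angle h\jw+\angle w_i}<\pi/2$ holds whenever $\angle h\jw$ is sufficiently close to $\theta$, and by compactness of $[\omega_0,\infty]$ together with finiteness of the family this closeness is only needed up to a fixed tolerance. The task thus reduces to producing a positive-real $h$ whose phase is approximately the constant $\theta$ over a chosen band, climbs to $\pi/2$ as $\omega\to0$, and rolls off to $0$ as $\omega\to\infty$. This is exactly what the interlacing multiplier class $\frac{s}{s+T}\prod_{k}\frac{s+\alpha_k}{s+\beta_k}$ delivers: the factor $\frac{s}{s+T}$ supplies both the simple zero at the origin and the phase $\pi/2$ at low frequency, while a suitably spaced cascade of the remaining first-order factors shapes the phase down to an approximately constant plateau near $\theta$ across any prescribed band---this is the content of the constant-phase (off-axis circle) construction of \cite{CN68}, and it is available precisely because $\theta<\pi/2$. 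The main obstacle is this phase-shaping step: verifying that one positive-real function can be made to follow the frequency-dependent phase corridor globally, and in particular through the transition near the origin, where $\angle h\jw$ must climb from $\theta$ to $\pi/2$ while $\angle w_i$ descends toward $-\pi/2$ without the sum leaving $\funof{-\pi/2,\pi/2}$. Once such an $h$ is fixed, combining the uniform interior margin with the strictly positive boundary values yields a single $\epsilon>0$ valid for all $i$, giving $p_1,\ldots,p_n\in\mathcal{P}_h$.
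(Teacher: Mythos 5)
You have assembled essentially the paper's ingredients---the reduction through \Cref{lem:fr}, a factor $s/\funof{s+T}$ to supply the zero at the origin, and the Cho--Narendra multiplier machinery---but your proposal has a genuine gap, and it sits exactly where you flag ``the main obstacle'': nothing in your argument actually produces a single \gls{pr} function whose phase tracks your \emph{frequency-dependent} corridor globally. The construction you appeal to in \cite{CN68} does not do this. It furnishes a multiplier for functions whose Nyquist plots lie in one \emph{fixed} off-axis half-plane (equivalently, it produces \gls{pr} functions whose phase is nearly a prescribed \emph{constant} over a prescribed finite band); it says nothing about a corridor whose admissible phase window moves with $\omega$. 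That is precisely what you need, because when $\theta>0$ the hypothesis permits $\angle\funof{1+p_i\jw/j\omega}$ to dip below $-\pi/2$ near the origin while approaching $\pi/2-\theta$ at other frequencies, so the lower bound you must impose on $\angle h\jw$ and the upper bound coming from the mid-band fight each other near $\omega=0$. Turning this race of rates---how fast $\angle h\jw$ falls from $\pi/2$ versus how fast $\angle w_i$ recovers toward $-\pi/2$---into a uniform bound is the entire content of the lemma, and your write-up leaves it as an acknowledged unresolved step rather than a proof.

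The paper closes exactly this gap by a reorganization you are missing: it absorbs the regularizing factor into the \emph{plant} rather than the multiplier. Define $g_i\s=\funof{s/\funof{s+T}}\funof{1+p_i\s/s}=\funof{s+p_i\s}/\funof{s+T}\in\discalg$; then one shows that for $T$ sufficiently large there is a single $\epsilon>0$ with $\text{Re}\funof{-je^{j\funof{\theta+1/T}}g_i\jw}\geq{}\epsilon$ for all $i$ and all $\omega\geq{}0$. This is a \emph{fixed} half-plane condition on the whole extended axis: the endpoints are tamed by construction ($g_i\funof{0}=p_i\funof{0}/T>0$ and $g_i\funof{\infty}=1$, both handled by the strict rotation margin $\sin\funof{\theta+1/T}>0$), and the small extra rotation $1/T$ is the device that buys uniform slack through your problematic transition: at low frequencies the phase lag $\arctan\funof{\omega/T}$ of the factor $s/\funof{s+T}$ stays below $1/T$ so the hypothesis covers the lower boundary while $s+p_i\s\approx{}p_i\funof{0}>0$ covers the upper one, and at high frequencies the roles reverse, with $s+p_i\s\approx{}j\omega$ sitting far from the critical boundary. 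With this uniform fixed-half-plane condition in hand, \cite[Theorem 2]{CN68} applies as a black box to give an RC multiplier $h_{RC}$ with $h_{RC}g_i\in\ESPR$, and $h\s=h_{RC}\s{}s/\funof{s+T}$ finishes the proof since $h\s\funof{1+p_i\s/s}=h_{RC}\s{}g_i\s$. If you want to repair your own argument, this pre-composition with $s/\funof{s+T}$ together with the extra $e^{j/T}$ rotation is the missing idea.
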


Even if a fixed half-plane cannot be used, this process can be used to identify frequency ranges where different slopes are suitable. An $h\s$ to match these slopes in the these frequency ranges can then be obtained using a lead-lag design. Alternatively other graphical or computational methods for multiplier design can be used, for example Popov plots. For further discussions about the design of half-planes from the perspective of robustness and performance, see \cite{Pat15}.

\subsubsection{Synthesis of Controllers}

Consider the synthesis counterpart to \Cref{prob:analysis}.
\begin{problem}\label{prob:synthesis}
Given $G_i,h$,
\[
\begin{aligned}
\text{maximize}\quad&\gamma\\
\text{subject to}\quad&\gamma{}\lft{l}{G_i}{c_i}\in\mathcal{P}_h\\
&c_i\in\Rat_{c_i}
\end{aligned}
\]
where $\Rat_{c_i}\subseteq{}\Rat$ denotes the set of possible designs for $c_i$.
\end{problem}

Solving the above maximizes the robustness margin introduced in \Cref{sec:an}. In the power system context, simple controllers are typically desired. In this case the most effective way to solve \Cref{prob:synthesis} is probably to solve the analysis problem in \Cref{prob:analysis} for a range of controller gains, and then select those that maximize $\gamma$. This will be illustrated for \gls{agc} design in \Cref{ex:3}. Alternatively lead-lag design with respect to diagrams such as \Cref{fig:PHDef} offers another simple alternative. Formal synthesis methods can also be used. In fact, when $\Rat_{c_i}=\Rat$ and $G_i\in\Rat$, \Cref{prob:synthesis} can be solved using the \Hfty{} based tools of \cite{SKS94}. 

\begin{theorem}[\cite{SKS94}]\label{thm:hfty}
Let
\[
M=\sqfunof{
\begin{array}{c|cc}
     A&B_1&B_2  \\\hline
     C_1&D_{11}&D_{12}\\
     C_2&D_{21}&0 
\end{array}
},
\]
and assume that $\funof{A,B_2}$ is stabilizable and that $\funof{C_2,A}$ is detectable. Then there exists a strictly proper controller $c\s$ such that $\lft{l}{M}{c}\in\ESPR$ if and only if there exist matrices $X_1,X_2,Y_1,Y_2$ such that
\[
\begin{aligned}
\begin{bmatrix}
AX_1+B_2X_2&0\\
C_1X_1+D_{12}X_2-B_1^T&-D_{11}
\end{bmatrix}+\funof{\star}^T&\prec{}0,\\
\begin{bmatrix}
Y_1A+Y_2C_2&Y_1B_1+Y_2D_{21}-C_1^T\\
0&-D_{11}
\end{bmatrix}+\funof{\star}^T&\prec{}0,\\
\begin{bmatrix}
-X_1&I\\I&-Y_1
\end{bmatrix}&\prec{}0,
\end{aligned}
\]
where $\funof{\star}^T$ denotes the transpose of the matrix on its left.
\end{theorem}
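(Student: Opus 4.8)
The plan is to reduce the synthesis statement to a state--space feasibility problem via the strict positive--real (\gls{kyp}) lemma, and then to eliminate the controller variables by a projection argument. Since the controller $c\s$ is strictly proper, I would write a realisation $c\s=C_c\funof{sI-A_c}^{-1}B_c$ and form the closed loop $\lft{l}{M}{c}$. Using $D_{22}=0$, its realisation is
\[
A_{cl}=\begin{bmatrix}A&B_2C_c\\B_cC_2&A_c\end{bmatrix},\;
B_{cl}=\begin{bmatrix}B_1\\B_cD_{21}\end{bmatrix},\;
C_{cl}=\begin{bmatrix}C_1&D_{12}C_c\end{bmatrix},
\]
with feedthrough $D_{cl}=D_{11}$ (the controller contributes nothing because it is strictly proper, so $c\funof{\infty}=0$). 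By the strict positive--real form of the \gls{kyp} lemma applied to the closed loop, $\lft{l}{M}{c}\in\ESPR$ is equivalent to the existence of $P\succ0$ with
\[
\begin{bmatrix}A_{cl}^TP+PA_{cl}&PB_{cl}-C_{cl}^T\\B_{cl}^TP-C_{cl}&-\funof{D_{11}+D_{11}^T}\end{bmatrix}\prec0.
\]
Note that the $(1,1)$ block already forces $A_{cl}$ to be Hurwitz, so internal stability comes for free, and the stabilisability of $\funof{A,B_2}$ and detectability of $\funof{C_2,A}$ ensure that a stabilising $c\s$ exists so that this certificate is the correct one.

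Next I would treat this matrix inequality as \emph{jointly bilinear} in $P$ and the controller data $\funof{A_c,B_c,C_c}$. Partition $P$ and $P^{-1}$ conformally with the plant/controller split and call the leading blocks $X_1$ and $Y_1$. A standard congruence by $\diag\funof{\Pi,I}$, where $\Pi$ is assembled from these blocks together with the off-diagonal factors of $P$, renders the inequality affine in a transformed set of controller variables. The core step is then to invoke the projection (elimination) lemma to remove those variables: because they enter only through the two structured outer factors associated with $\funof{B_2,D_{12}}$ and with $\funof{C_2,D_{21}}$, eliminating them produces exactly two inequalities --- one living in the left null space of $\begin{bmatrix}B_2^T&D_{12}^T&0\end{bmatrix}$ and one in that of $\begin{bmatrix}C_2&D_{21}&0\end{bmatrix}$. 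These are the first two displayed \gls{lmi}s of the theorem, with $X_2$ and $Y_2$ absorbing bases of the respective null spaces. The coupling condition $\begin{bmatrix}-X_1&I\\I&-Y_1\end{bmatrix}\prec0$ is then just the Schur--complement restatement of the requirement that $X_1$ and $Y_1$ be compatible leading blocks of a common $P\succ0$, i.e.\ $X_1\succ0$, $Y_1\succ0$ and $X_1\succ Y_1^{-1}$, which is precisely what permits $P$ to be reconstructed.

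For the converse (sufficiency) I would, given feasible $X_1,X_2,Y_1,Y_2$, reconstruct a positive definite $P$ from $X_1,Y_1$ using the coupling inequality (choosing the off-diagonal completion so that $P$ and $P^{-1}$ carry the prescribed blocks), and then solve the now-affine inequality for the transformed controller matrices; this is always possible, again by the projection lemma, once the two projected \gls{lmi}s hold. The reconstructed $c\in\Rat$ has order equal to that of the plant and makes the closed loop both internally stable and \gls{espr}, establishing the ``if'' direction.

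The main obstacle I anticipate is the bookkeeping in the elimination step: getting the signs and transposes right, identifying the two null spaces correctly, and confirming that the feedthrough term $D_{11}+D_{11}^T\succ0$ --- which \gls{espr} forces, since by \Cref{lem:fr} one needs $\mathrm{Re}\funof{g\jw}\geq\epsilon$ including at $\omega=\infty$ --- is carried consistently through the congruence so that it reappears as the $-D_{11}$ blocks of the stated \gls{lmi}s. A secondary subtlety is passing rigorously between the frequency--domain meaning of \gls{espr} and the strict matrix inequality; this is handled by the $\epsilon$ in \Cref{def:pr} together with \Cref{lem:fr}, which certify that the strict \gls{lmi} corresponds to $\mathrm{Re}\funof{\lft{l}{M}{c}\jw}\geq\epsilon$ uniformly in $\omega$.
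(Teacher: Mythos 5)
First, a point of comparison: the paper offers no proof of \Cref{thm:hfty} at all --- the result is imported verbatim from \cite{SKS94}, and the text even omits the accompanying controller formula for space reasons. So your proposal can only be judged on its own merits, not against an in-paper argument.

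Your strategy --- apply the strict positive-real form of the \gls{kyp} lemma to the closed loop, partition $P$ and $P^{-1}$, and remove the controller data --- is the standard \gls{lmi}-synthesis route (the Gahinet--Apkarian/Scherer argument transposed from the $\Hfty$ setting to the \gls{espr} objective), and it is sound in outline. Two steps, however, need repair before it becomes a proof. First, the elimination lemma does not produce the two displayed \glspl{lmi}: it produces inequalities \emph{projected onto} bases of $\Ker\sqfunof{B_2^T\;D_{12}^T}$ and of $\Ker\sqfunof{C_2\;D_{21}}$, in which no free variables $X_2,Y_2$ appear. The theorem's conditions are instead in the change-of-variables form, with $X_2$ playing the role of $FX_1$ for a state-feedback gain $F$ and $Y_2$ that of $Y_1L$ for an output-injection gain $L$; these are equivalent to the projected inequalities, but only after a further application of Finsler's/the elimination lemma, in the course of which one must also confirm that the trivial projection (e.g.\ onto $\Ker\sqfunof{I\;0}$) reduces to $D_{11}+D_{11}^T\succ0$, which \gls{espr} supplies. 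Saying that $X_2,Y_2$ ``absorb bases of the respective null spaces'' is not correct, and it hides exactly the bookkeeping that the proof consists of. Second, in the ``only if'' direction your \gls{kyp} certificate for the closed-loop realisation exists only if $A_{cl}$ is Hurwitz; membership of $\lft{l}{M}{c}$ in $\ESPR$ is an input--output property and by itself does not exclude unstable hidden modes of the particular realisation you wrote down. The problem solved in \cite{SKS94} asks for internal stabilisation \emph{and} \gls{espr}, and that is the reading under which your argument goes through; under the literal reading of the statement, the necessity direction additionally requires showing that any $c$ achieving \gls{espr} can be replaced by one that also internally stabilises. Neither issue breaks the strategy, but both must be handled explicitly for the argument to be complete.
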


In \cite{SKS94} they also give an explicit realisation of a controller that renders $\lft{l}{M}{c}\in\ESPR$, though due to space limitations we omit this. \Cref{thm:hfty} allows \Cref{prob:synthesis} to be solved as follows. By computing a minimal realisation $M_\gamma{}$ of the transfer function
\[
\begin{bmatrix}
\frac{\gamma{}h\s}{s}&0\\0&I_n
\end{bmatrix}G_i\s+\begin{bmatrix}
h\s&0\\0&0
\end{bmatrix},
\]
and checking the \glspl{lmi} in \Cref{thm:hfty}, the optimal solution to \Cref{prob:synthesis} can be computed to arbitrary precision using a bisection over $\gamma$. Synthesis with further performance and robustness guarantees is also possible by adding more constraints to \Cref{prob:synthesis}. Again, see \cite{Jon01} for an introduction.

\subsection{Do There Exist Better Scale-Free Design Criteria?}\label{sec:d}

\Cref{thm:main} does not offer the only way to conduct scale-free design. For example, passivity theory shows that if for all $i$
\[
\gamma_i\lft{l}{G_i}{c_i}\in\ESPR,
\]
then the power system model is stable\footnote{This is because $\frac{1}{s}L$ is passive for all $L\in\mathcal{L}$, and the negative feedback interconnection of a passive and strictly passive system is stable (e.g. \cite{BL+06}).}. This condition could also be used to conduct decentralised design, and gives the same types of guarantees as \cref{eq:basic}. In this section we will both show that this passivity based condition is a special case of \cref{eq:basic}, and also that in some sense the criteria from \Cref{thm:main} are the best possible. The following demonstrates the first claim, and is proved in \Cref{app:7}.

\begin{lemma}\label{lem:passive}
If $p_1,\dots{},p_n\in\ESPR$, then there exists an $h\in\PR\cap\discalg{}$ such that $p_1,\dots{},p_n\in\mathcal{P}_h$.
\end{lemma}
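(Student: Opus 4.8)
The plan is to exhibit an explicit multiplier, namely $h\s=\frac{s}{s+T}$ for a sufficiently large constant $T>0$, and to verify directly that it works. Two preliminary observations make this choice natural. First, since each $p_i\in\ESPR$, \Cref{lem:fr} supplies a constant $\epsilon_i>0$ with $\text{Re}\funof{p_i\jw}\geq\epsilon_i$ for all $\omega\in\R\cup\cfunof{\infty}$; evaluating at $\omega=0$ and using continuity on the extended imaginary axis (together with the fact that a \gls{pr} function is real on the real axis) shows $p_i\funof{0}\geq\epsilon_i>0$, so the requirement $p_i\funof{0}\neq0$ in the definition of $\mathcal{P}_h$ holds automatically. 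Second, $h\s=\frac{s}{s+T}$ is readily checked to lie in $\PR\cap\discalg$: its only pole is at $-T$, it is real for real $s$, and $\text{Re}\funof{h\jw}=\omega^2/\funof{\omega^2+T^2}\geq0$, with $h$ bounded and continuous on the extended imaginary axis.

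Next I would compute the relevant real part. Writing $h\s\funof{1+p_i\s/s}=\frac{s+p_i\s}{s+T}$ shows this product lies in $\discalg$ (the pole at $-T$ is in the open left half plane, the function is bounded on the closed right half plane, and it is continuous on the extended imaginary axis with limit $1$ at $\infty$). Setting $p_i\jw=a_i\funof{\omega}+jb_i\funof{\omega}$ with $a_i\geq\epsilon_i$, a direct calculation gives
\[
\text{Re}\funof{\frac{j\omega+p_i\jw}{j\omega+T}}=\frac{\omega^2+\omega b_i\funof{\omega}+T a_i\funof{\omega}}{\omega^2+T^2}.
\]
By \Cref{lem:fr}, establishing $h\s\funof{1+p_i\s/s}\in\ESPR$, and hence $p_i\in\mathcal{P}_h$, amounts to bounding this expression below by a positive constant, uniformly in $\omega$ and simultaneously over all $i$.

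The crux is therefore the uniform positivity of the numerator $N_i\funof{\omega}=\omega^2+\omega b_i\funof{\omega}+Ta_i\funof{\omega}$. Using $a_i\geq\epsilon_i$ together with the boundedness $\abs{b_i\funof{\omega}}\leq\norm{p_i}_\infty=:M_i$ (valid since $p_i\in\Hfty$), I would bound $N_i\funof{\omega}\geq\omega^2-\abs{\omega}M_i+T\epsilon_i$, whose right-hand side is a quadratic in $\abs{\omega}$ with strictly positive minimum $T\epsilon_i-M_i^2/4$ as soon as $T>M_i^2/\funof{4\epsilon_i}$. Choosing $T>\max_i M_i^2/\funof{4\epsilon_i}$ (a finite maximum over the $n$ components) thus makes every $N_i$, and hence the real part above, strictly positive for all finite $\omega$; at $\omega=\infty$ the real part equals $1$. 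Since each real part is continuous on the compact set $\R\cup\cfunof{\infty}$ and strictly positive there, it attains a positive minimum, and taking the smallest such minimum over $i$ yields a single $\epsilon>0$ with $\text{Re}\funof{h\jw\funof{1+p_i\jw/\funof{j\omega}}}\geq\epsilon$ for all $\omega$ and all $i$. \Cref{lem:fr} then gives $p_1,\dots,p_n\in\mathcal{P}_h$, as required.

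The only genuine obstacle is this frequency-uniform positivity holding simultaneously for every component. It is precisely what forces $T$ to be taken large: the $\omega b_i\funof{\omega}$ term is sign-indefinite and must be dominated at intermediate frequencies, while a larger denominator $\omega^2+T^2$ works against us, so one must check that the quadratic lower bound (rather than a pointwise estimate) survives. Compactness of $\R\cup\cfunof{\infty}$, available because $p_i\in\discalg$, then upgrades strict pointwise positivity to the required uniform margin $\epsilon>0$.
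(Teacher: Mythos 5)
Your proof is correct and takes essentially the same approach as the paper's: the same multiplier $h\s=s/(s+T)$, the same real-part computation $\bigl(\omega^2+\omega\,\mathrm{Im}\,p_i\jw+T\,\mathrm{Re}\,p_i\jw\bigr)/(\omega^2+T^2)$, and the same conclusion by choosing $T$ large enough to dominate the sign-indefinite cross term. The only differences are cosmetic: you quantify how large $T$ must be (via the discriminant bound $T>\max_i M_i^2/(4\epsilon_i)$) and explicitly verify the side conditions $p_i\funof{0}\neq 0$ and $h\in\PR\cap\discalg$, which the paper leaves implicit.
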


The converse of \Cref{lem:passive} is not true. Indeed the models considered in \Cref{ex:2,ex:3} are not passive, but do satisfy \cref{eq:basic} for wide ranges of parameter values. In order to investigate whether there are better decentralised stability criteria than \cref{eq:basic}, suppose that for some frequency
\begin{equation}\label{eq:violate}
\text{Re}\funof{h\jw\funof{1+\frac{\gamma_1{}\lft{l}{G_1\jw}{c_1\jw}}{j\omega}}}<0.
\end{equation}
That is \cref{eq:basic} does not hold for the first bus, but perhaps only by an $\epsilon{}$ amount (compare \cref{eq:violate} with the conditions in \Cref{sec:fr}). The idea is that if a better decentralised condition existed, it would have to allow for \cref{eq:violate} to hold. The following theorem shows that for a broad class of functions $h\s$ (which includes all the multipliers used in the examples) this is not possible, since if \cref{eq:violate} holds then there exist $\gamma_2\lft{l}{G_2}{c_2},\dots{},\gamma_n\lft{l}{G_n}{c_n}\in\mathcal{P}_h$ and an $L_B$ meeting \Cref{ass:1} such that the power system model is unstable. This means that we cannot even relax the decentralised requirement for a single component by an $\epsilon{}$ amount and still obtain a-priori stability guarantees in a decentralised manner. 
\begin{theorem}\label{thm:converse}
Let $p_1\in\Hfty$, 
\[
h\s=\frac{s}{s+T}g\s\in\PR\cap\discalg,
\]
where $T>0$ and $g,g^{-1}\in\discalg$, and assume that \cref{eq:violate} holds for some $\omega>0$. Then given any $n\geq{}2$ there exist $p_2,\ldots{},p_n\in\mathcal{P}_h$ and an $L\in\mathcal{L}$ such that \cref{eq:seceq} is unstable.
\end{theorem}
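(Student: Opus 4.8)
The plan is to produce an instability by forcing a closed-loop pole exactly on the imaginary axis at $s=j\omega$, using a rank-one coupling matrix. Writing $P\s=\diag\funof{p_1\s,\ldots,p_n\s}$, the stability definition in the footnote requires $\funof{I+\tfrac1s LP\s}^{-1}\in\Hfty$, so I will arrange $\det\funof{I+\tfrac{1}{j\omega}LP\jw}=0$ with no cancellation, making this inverse have a genuine pole at $j\omega$ and hence not lie in $\Hfty$. The convenient choice is $L=\xi\xi^T$ with $\xi\in\R^n$, $\norm{\xi}=1$, which lies in $\mathcal{L}$ (it is symmetric, positive semidefinite, and its only nonzero eigenvalue is $\norm{\xi}^2=1$). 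For such $L$ the matrix determinant lemma gives $\det\funof{I+\tfrac1s LP\s}=1+\tfrac1s\sum_i\mu_i p_i\s$ with $\mu_i\coloneqq\xi_i^2\geq0$, $\sum_i\mu_i=1$, and Sherman--Morrison shows the inverse contains the scalar factor $\funof{s+\sum_i\mu_i p_i\s}^{-1}$. The construction thus reduces to choosing the weights $\mu_i$ and the free functions $p_2,\ldots,p_n$ so that $\sum_i\mu_i p_i\jw=-j\omega$, and to checking the resulting pole at $j\omega$ is not cancelled; it is not, since the residue direction $\xi\xi^TP\jw$ is nonzero (note $p_1\jw\neq0$, else $z_1=0$ would be positive-real and could not satisfy \cref{eq:violate}).

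The second step is the geometry that makes $\sum_i\mu_i p_i\jw=-j\omega$ compatible with feasibility. Dividing by $j\omega$ and setting $z_i\coloneqq p_i\jw/(j\omega)$, the target becomes $-1=\sum_i\mu_i z_i$, i.e. $-1\in\text{Co}\funof{z_i}$. By \Cref{lem:fr}, every $p_i\in\mathcal{P}_h$ forces $\text{Re}\funof{h\jw\funof{1+z_i}}>0$, so $z_i$ lies in the open half-plane whose bounding line passes through $-1$ (at $z=-1$ the factor $1+z$ vanishes), while the hypothesis \cref{eq:violate} places the given $z_1$ strictly in the complementary open half-plane. I therefore take $z_2$ on the ray from $z_1$ through $-1$, just past $-1$: explicitly $z_2=-1+\eta\funof{-1-z_1}$ for small $\eta>0$, equivalently $w_2\coloneqq p_2\jw=-j\omega-\eta\funof{j\omega+p_1\jw}$. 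A direct computation gives $h\jw\funof{1+w_2/(j\omega)}=-\eta\,h\jw\funof{1+z_1}$, whose real part is $-\eta\,\text{Re}\funof{h\jw\funof{1+z_1}}>0$, so $w_2$ is strictly feasible at frequency $\omega$; and since $-1,z_1,z_2$ are colinear with $-1$ between the endpoints, $-1=\mu_1 z_1+\mu_2 z_2$ with $\mu_1=\eta/(1+\eta)$, $\mu_2=1/(1+\eta)$ (and the remaining $\mu_i=0$). This fixes $\xi=\funof{\sqrt{\mu_1},\sqrt{\mu_2},0,\ldots,0}^T$ and hence $L$.

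The remaining, and hardest, step is to realize $w_2$ as the genuine value $p_2\jw$ of an honest member of $\mathcal{P}_h$, and then set $p_3=\cdots=p_n=p_2$. Using the factorization $h\s\funof{1+p\s/s}=\tfrac{g\s}{s+T}\funof{s+p\s}$, which we write as $\phi\s$, membership $p_2\in\mathcal{P}_h$ is equivalent to $\phi\in\ESPR$ together with $p_2\s=\tfrac{(s+T)\phi\s}{g\s}-s\in\Hfty$ and $p_2\funof{0}\neq0$. Here the hypotheses on $h$ are exactly what is needed: since $g^{-1}\in\discalg$ the factor $1/g\s$ is stable, the explicit factor $s/(s+T)$ controls the behaviour at the origin, properness of $p_2$ follows from $\phi\funof{\infty}=g\funof{\infty}$, and $\text{Re}\,\phi\funof{0}\geq\epsilon>0$ forces $p_2\funof{0}=T\phi\funof{0}/g\funof{0}\neq0$. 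It therefore suffices to exhibit one $\phi\in\ESPR$ with $\phi\jw=\phi_\omega\coloneqq-\eta\,h\jw\funof{1+z_1}$ (a point of strictly positive real part), with $\phi\funof{\infty}=g\funof{\infty}$ and $\text{Re}\,\phi\funof{j\nu}\geq\epsilon$ for all $\nu$. This is a positive-real interpolation problem at a single (conjugate) frequency, which I would solve by taking $\phi\s=g\funof{\infty}+r\s$ with $r$ a low-order real-rational positive-real correction, tuned via the biquadratic test \cref{eq:biquad} (or standard Nevanlinna--Pick/positive-real interpolation) so that it contributes $\phi_\omega-g\funof{\infty}$ at $j\omega$, vanishes at $0$ and $\infty$, and keeps the real part of the whole above $\epsilon$.

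The main obstacle is precisely this last realizability step: matching the prescribed complex value $w_2$ at $j\omega$ with a transfer function that is stable, proper, nonzero at the origin, and strictly positive in real part across all of $\R\cup\cfunof{\infty}$ simultaneously. The earlier steps are essentially algebraic once the rank-one reduction is in place; the interpolation is where the structural hypotheses $h\s=\tfrac{s}{s+T}g\s$ and $g,g^{-1}\in\discalg$ do their work, and where care is most needed near frequencies at which $\text{Re}\,h\funof{j\nu}=0$, since there the correction $r\s$ alone must supply the strict positivity.
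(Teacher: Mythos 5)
Your construction is essentially the paper's own proof: a rank-one $L=\xi\xi^T\in\mathcal{L}$ reduces instability to the scalar condition $1+\tfrac{1}{j\omega}\sum_i\mu_ip_i\jw=0$; the value assigned to $p_2\jw$ is taken on the ray from $z_1$ through $-1$, so that \cref{eq:violate} supplies exactly the required positivity $\mathrm{Re}\funof{h\jw\funof{1+z_2}}>0$; and $p_2$ is recovered from an $\ESPR$ interpolant $\phi$ via $p_2\s=\funof{s+T}\funof{\phi\s-h\s}g\s^{-1}$. The paper does precisely this with $\eta=1$ (its choice $p\jw/j\omega=-2-z_1$ is your reflection through $-1$) and weights $1/2$, $1/2$, and it settles the interpolation step by citing \cite[Lemma 1.14]{Vin00}. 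Your explicit non-cancellation check (Sherman--Morrison together with $p_1\jw\neq0$, which follows from \cref{eq:violate} and positive realness of $h$) is a detail the paper leaves implicit, and it is a genuine improvement.

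One step, however, would fail as you stated it. Your primary interpolation ansatz is $\phi\s=g\funof{\infty}+r\s$ with $r$ \emph{positive real}; but positive realness forces $\mathrm{Re}\,r\jw\geq0$, i.e.\ $\mathrm{Re}\,\phi_\omega\geq g\funof{\infty}$, where $g\funof{\infty}=h\funof{\infty}>0$ is a fixed constant (nonzero because $g^{-1}\in\discalg$, real and nonnegative because $h\in\PR$). Your target is $\phi_\omega=-\eta\,h\jw\funof{1+z_1}$, whose real part is of order $\eta$; since you insist on small $\eta$, you have $\mathrm{Re}\,\phi_\omega<g\funof{\infty}$, so the required correction would need $\mathrm{Re}\,r\jw<0$ and cannot be positive real. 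The repair is easy and does not disturb the architecture of your argument: either take $\eta$ large enough that $\mathrm{Re}\,\phi_\omega>g\funof{\infty}$ (your geometric and feasibility steps work for every $\eta>0$, not only small $\eta$), or drop the positive-real-correction form and solve the boundary interpolation problem directly --- prescribing values in the open right half-plane at $j\omega$ and at $\infty$ always admits an $\ESPR$ interpolant, which is exactly what the paper invokes through \cite[Lemma 1.14]{Vin00}.
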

\begin{proof}
The interconnection in \cref{eq:seceq} is stable only if
\[
M\jw=\funof{I+L\diag\funof{p_1\jw,\ldots{},p_n\jw}/j\omega}
\]
is invertible. Now suppose that $p_2\!\s\!=\!\ldots{}\!=\!p_n\!\s\!=\!p\!\s$, and
\[
L=\begin{bmatrix}
1/\sqrt{2}\\-\sqrt{\frac{1}{2\funof{n-1}}}\mathbf{1_{n-1}}
\end{bmatrix}\begin{bmatrix}
1/\sqrt{2}\\-\sqrt{\frac{1}{2\funof{n-1}}}\mathbf{1_{n-1}}
\end{bmatrix}^T.
\]
Under these conditions $L\in\mathcal{L}$ and
\[
\det{}M\jw=1+\tfrac{1}{2}p_1\jw/j\omega+\tfrac{1}{2}p\jw/j\omega.
\]
Letting $x=p_1\jw/j\omega$, we see that if $p\jw/j\omega\equiv{}-x-2$ then $\det{}M\jw=0$, and therefore $M\jw$ is not invertible. Therefore all we need to do is find a $p\in\mathcal{P}_h$ such that $p\jw/j\omega=-x-2$. Equivalently we can find a $q\in\ESPR$ such $q\jw=h\jw\funof{-1-x}$ and $q\funof{\infty}=h\funof{\infty}$, and then set
\[
p\s=\funof{s+T}\funof{q\s-h\s}g\s^{-1}.
\]
Provided $\text{Re}\funof{h\jw\funof{-1-x}}>0$, such a $q$ can always be found using well known interpolation results (for example \cite[Lemma 1.14]{Vin00}). Observing that by assumption
\[
\text{Re}\funof{h\jw\!\funof{-1-x}}\!=\!-\text{Re}\funof{h\jw\!\funof{1+\tfrac{p_1\jw}{j\omega}}}>0
\]
completes the proof.
\end{proof}

\section{Examples}\label{sec:examples}

The three examples in this section show that our conditions can be used to: (i) demonstrate stability of existing power system models; (ii) give delay robustness guarantees for the swing dynamics with delayed droop control; and (iii) analyse the robust stability of automatic generation control (AGC) and design novel AGC controllers.

\subsection{Stability of the Swing Equations}\label{sec:ex1}

In this example we will show that our criteria can be used to verify stability of the swing equations when there is no control. It is or course no great surprise that this model is stable, and many other tools can be used to prove this. It is nevertheless reassuring that our conditions can easily cover this case.

If we have a swing equation model with no control, then for all $i$, $c_i=0$, and consequently
\begin{equation}\label{eq:ex1show}
\lft{l}{G_i}{c_i}=\frac{1}{m_is+d_i},
\end{equation}
Therefore in this case, \cref{eq:basic} simplifies to
\begin{equation}\label{eq:ex1}
\frac{\gamma_i}{m_is+d_i}\in\mathcal{P}_h.
\end{equation}
The following corollary shows that there exists an $h$ such that the above holds for arbitrarily large $\gamma_i$ given any $m_i\geq{}0$ and $d_i>0$. Therefore the swing equation model is stable by \Cref{thm:main} for any possible parameter values, operating point and interconnection configuration. The proof uses the tools from circuit theory discussed in \Cref{sec:circtheory}, illustrating their strength when simple parametrised models are considered.

\begin{corollary}\label{ex:1}
Let $p_1\s=\gamma_1/\funof{m_1s+d_1},\ldots{},p_n\s=\gamma_n/\funof{m_ns+d_n}$. If for all $i$
\[
m_i\geq{}0,\, d_i>0\,\text{and}\,\gamma_i>0,
\]
then there exists an $h\in\PR\cap\discalg{}$ such that $p_1,\ldots{},p_n\in\mathcal{P}_h$.
\end{corollary}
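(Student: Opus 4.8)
The plan is to exhibit a single explicit first-order multiplier and to reduce the membership condition $p_i\in\mathcal{P}_h$ to the algebraic biquadratic test of \Cref{sec:circtheory}. I would take $h\s=\tfrac{s}{s+T}$ with one free parameter $T>0$ fixed at the very end, large enough to work simultaneously at all $n$ (finitely many) buses. First I would dispatch the easy preliminaries: $h$ is stable, proper, and tends to $1$ at infinity, so $h\in\discalg$; writing $h\s=1-\tfrac{T}{s+T}$ and bounding $\mathrm{Re}\funof{T/(s+T)}\leq{}1$ on $\mathrm{Re}\funof{s}>0$ shows $h\in\PR$. Likewise each $p_i\s=\gamma_i/\funof{m_is+d_i}$ lies in $\Hfty$ (its only pole is at $-d_i/m_i<0$, or it is constant when $m_i=0$) and satisfies $p_i\funof{0}=\gamma_i/d_i\neq{}0$, so the sole substantive requirement is that $g_i\s\coloneqq{}h\s\funof{1+p_i\s/s}\in\ESPR$.

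The key observation is that this product telescopes into a biquadratic,
\[
g_i\s=\frac{m_is^2+d_is+\gamma_i}{\funof{s+T}\funof{m_is+d_i}}=\frac{m_is^2+d_is+\gamma_i}{m_is^2+\funof{d_i+m_iT}s+Td_i},
\]
which is stable and proper, hence lies in $\discalg$. I would then apply the positive-real criterion \cref{eq:biquad}: all six coefficients are nonnegative, and after the cross terms $m_iTd_i$ cancel, the remaining inequality $\funof{\sqrt{a_2b_0}-\sqrt{a_0b_2}}^2\leq{}a_1b_1$ collapses to $m_i\gamma_i-2m_i\sqrt{Td_i\gamma_i}\leq{}d_i^2$. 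This holds strictly as soon as $2\sqrt{Td_i}>\sqrt{\gamma_i}$, i.e. $T>\gamma_i/\funof{4d_i}$; choosing $T>\max_i\gamma_i/\funof{4d_i}$ secures it at every bus at once, and the inertialess case $m_i=0$ is covered automatically since the left-hand side is then $0<d_i^2$.

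To upgrade positive-realness to membership in $\ESPR$ I would invoke the ESPR clause of \Cref{def:pr}: since \cref{eq:biquad} held with \emph{strict} inequality at $\epsilon=0$, and the coefficients of the numerator of $g_i\s-\epsilon$ depend continuously on $\epsilon$ and remain nonnegative for small $\epsilon>0$, the same test certifies $g_i\s-\epsilon\in\PR$ for all sufficiently small $\epsilon>0$. Hence $g_i\in\ESPR$, so $p_i\in\mathcal{P}_h$ for every $i$ with the common multiplier $h$, which is exactly the assertion of the corollary (stability of the associated interconnection then being immediate from \Cref{thm:main}). As an alternative one could skip \cref{eq:biquad} and compute $\mathrm{Re}\funof{g_i\jw}$ directly, whose numerator simplifies to $m_i^2\omega^4+\funof{d_i^2-\gamma_im_i}\omega^2+\gamma_iTd_i$, and then apply \Cref{lem:fr} using $g_i\funof{\infty}=1$ and compactness of $\R\cup\cfunof{\infty}$.

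I expect the only genuine obstacle to be the $\ESPR$-versus-$\PR$ gap: the circuit-theoretic test natively certifies positive-realness, so some care is needed to extract a strict margin $\epsilon$, as is the uniform treatment of the borderline $m_i=0$ (bilinear) case where the biquadratic degenerates. Everything else reduces to routine bookkeeping once the telescoping of $h\funof{1+p_i/s}$ is noticed and the single parameter $T$ is driven large.
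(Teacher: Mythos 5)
Your proposal is correct and takes essentially the same route as the paper's own proof: the same multiplier $h(s)=s/(s+T)$, the same telescoping of $h(s)\left(1+p_i(s)/s\right)$ into a biquadratic, and the same circuit-theoretic test \cref{eq:biquad}. The only differences are bookkeeping: the paper carries $\epsilon$ through the numerator coefficients from the outset and bounds $\left(\sqrt{a_2b_0}-\sqrt{a_0b_2}\right)^2$ by $\max\{a_2b_0,a_0b_2\}$ (so it needs $T$ roughly as large as $\gamma_i/d_i$), whereas you verify the strict inequality exactly at $\epsilon=0$ (yielding the sharper threshold $T>\gamma_i/(4d_i)$) and then recover the ESPR margin by a continuity argument in $\epsilon$ over the finitely many buses.
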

\begin{proof}
Let $h\s=\frac{s}{Ts+1}$.  It is sufficient to show that for all $i$ there exists an $\epsilon>0$ such that
\[
\frac{s}{s+T}\funof{1+\frac{\gamma_i}{s\funof{m_is+d_i}}}-\epsilon\in\PR.
\]
Multiplying out the above shows that it is equivalent to
\[
\frac{\funof{1-\epsilon}m_is^2+\funof{d_i-d_i\epsilon{}-T\epsilon{}m_i}s+\gamma_i-Td_i\epsilon{}}{m_is^2+\funof{d_i+Tm_i}s+Td_i}\in\PR.
\]
We can show that the above holds by applying \cref{eq:biquad}. Note however that $\funof{\sqrt{a_2b_0}-\sqrt{a_0b_2}}^2\leq{}\max\cfunof{a_2b_0,a_0b_2}$, and that if $T$ is sufficiently large and $\epsilon$ sufficiently small, then for all $i$
\[
\funof{1-\epsilon}m_iTd_i\geq{}m_i\funof{\gamma_i-Td_i\epsilon{}}.
\]
Therefore it is sufficient to show that $\funof{1-\epsilon}m_iTd_i\leq{}\funof{d_i-d_i\epsilon{}-T\epsilon{}m_i}\funof{d_i+Tm_i}$. Multiplying out this expression yields
\[
d_i^2-\funof{d_i\funof{d_i+Tm_i}+T^2m_i^2}\epsilon\geq{}0.
\]
We can always pick $\epsilon$ small enough so that the above holds for all $i$, which completes the proof.
\end{proof}

\subsection{Stability of Droop Control Subject to Delay}\label{ex:2}

\begin{figure}
    \centering
    \input{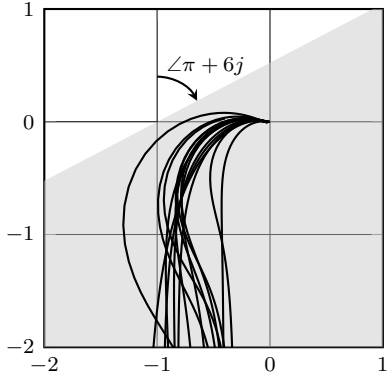}
    \caption{The curves show the Nyquist diagrams of $\gamma_i\lft{l}{G_i}{c_i}/s$ for a range of parameter values meeting \cref{eq:scfrdroop,eq:scfrdroop1}. By \Cref{lem:delays}, all these curves lie within the same half-plane, and the effect of increasing the delay is to push the curves closer to its boundary.}
    \label{fig:delaydroop}
\end{figure}

In this example we will use our criteria to verify stability of the swing equations when there is droop control subject to delays. In order to get simpler criteria we will neglect governor and turbine dynamics (these can easily be included, and will be in the next example). This model is described by
\begin{equation}\label{eq:delay-swing}
\begin{aligned}
G_i=\frac{1}{m_is+d_i}\begin{bmatrix}
1&1\\1&1
\end{bmatrix},\;c_i=-\frac{1}{r_i}e^{-s\tau_i},\\\Longrightarrow{}\lft{l}{G_i}{c_i}=\frac{1}{m_is+d_i+\frac{1}{r_i}e^{-s\tau_i}}.
\end{aligned}
\end{equation}
In the above $r_i>0$ is the droop constant, and $\tau_i\geq{}0$ a measurement delay.

In the following we will show that if for all $i$
\begin{equation}\label{eq:scfrdroop}
r_i\leq{}\sqrt{2/\gamma_im_i},
\end{equation}
then stability of the power system model is guaranteed by \Cref{thm:main} for any values of the delays that satisfy
\begin{equation}\label{eq:scfrdroop1}
0\leq{}\tau_i<\pi{}m_ir_i/4,
\end{equation}
and for any non-negative values of the natural damping constants $d_i$ (which are typically unknown). This perfectly illustrates the strength of our approach for conducting design in the network setting. By using \Cref{thm:main}, the task of synthesizing decentralised controllers to guarantee robust stability to delays in a large uncertain system --a daunting task-- has been simplified to picking a set of constant gains that satisfy a simple inequality. Such constants always exist, and the resulting controllers are simple to implement. Furthermore the design comes with a-priori guarantees about robustness to delays and levels of natural damping, that hold entirely independently of operating point and interconnection configuration.

To derive this result we will use the approach outlined in \Cref{sec:res3}. As suggested there, in order to choose a suitable $h\s$, we plot the Nyquist diagrams of $\lft{l}{G_i\s}{c_i\s}/s$ for a range of parameter values. This is shown in \Cref{fig:delaydroop}. This not only shows that passivity tools cannot be used, even for arbitrarily small values of the delay, but also that the Nyquist diagrams lie within the same half-plane for wide ranges of parameter values. This suggests that we can use \Cref{lem:offaxiscirc} to verify the decentralised stability requirement in \cref{eq:basic}. In fact this requirement can be turned into parameter dependent inequalities, as shown in \Cref{lem:delays} below. For ease of presentation we only give the result for the special choice of half-plane that leads to \cref{eq:scfrdroop,eq:scfrdroop1}. For generalizations of these inequalities and the proof, see \Cref{app:6}.

\begin{lemma}\label{lem:delays}
Let $m\geq{}0,r>0$ and $\gamma{}>0$. If
\[
r\leq{}\sqrt{2/{\gamma}m},
\]
then for all $0\leq{}\tau<\pi{}mr/4,d\geq{}0$ and $\omega>0$,
\[
\text{Re}\funof{\funof{\pi+6j}\funof{1+\frac{\gamma{}}{j\omega\funof{mj\omega+d+\frac{1}{r}e^{-j\omega{}\tau}}}}}>0.
\]
\end{lemma}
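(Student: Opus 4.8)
The plan is to verify the inequality
\[
\text{Re}\funof{\funof{\pi+6j}\funof{1+\frac{\gamma{}}{j\omega\funof{mj\omega+d+\frac{1}{r}e^{-j\omega{}\tau}}}}}>0
\]
directly by expanding the real part as a function of $\omega$, $\tau$, and the fixed parameters, and then arguing that the worst case (smallest real part) occurs at the boundary of the allowed parameter ranges. First I would denote $z=\gamma/\funof{j\omega\funof{mj\omega+d+\tfrac{1}{r}e^{-j\omega\tau}}}$, so that the expression becomes $\text{Re}\funof{\funof{\pi+6j}\funof{1+z}}=\pi+\text{Re}\funof{\funof{\pi+6j}z}=\pi+\pi\,\text{Re}\funof{z}-6\,\text{Im}\funof{z}$. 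The task is then to show this stays positive. Since the term $\pi$ (coming from the $1$) is a fixed positive buffer, the content of the lemma is to bound how negative $\pi\,\text{Re}\funof{z}-6\,\text{Im}\funof{z}$ can become.

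The key step is to compute the denominator of $z$. Writing $e^{-j\omega\tau}=\cos\funof{\omega\tau}-j\sin\funof{\omega\tau}$, the denominator $j\omega\funof{mj\omega+d+\tfrac{1}{r}e^{-j\omega\tau}}$ expands to a concrete complex number in terms of $\omega\tau$, $m\omega^2$, $d\omega$, and $\tfrac{\omega}{r}$. I would then form $z$ by rationalising, and substitute into $\pi+\pi\,\text{Re}\funof{z}-6\,\text{Im}\funof{z}$. The natural normalisation is to introduce the dimensionless variable $\phi=\omega\tau$, and to track the remaining groupings $m\omega^2$ and $\tfrac{\omega}{r}$ against $\phi$ via $\omega=\phi/\tau$. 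The sign condition $\tau<\pi m r/4$ should translate, after this substitution, into a statement that $\phi$ stays in a range where $\cos\phi$ and $\sin\phi$ keep the critical term controlled. The role of the hypothesis $r\leq\sqrt{2/\gamma m}$, i.e. $\gamma\leq 2/\funof{m r^2}$, is presumably to bound the magnitude of $\gamma$ relative to the $m$ and $r$ terms that dominate the denominator at the relevant frequencies, so that the numerator-to-denominator ratio is small enough.

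The reduction I expect is a monotonicity or worst-case argument: for fixed $\omega$, the expression is minimised over the admissible $d\geq 0$ at some extreme (most likely $d=0$, since added damping pushes the Nyquist point rightward and only helps), and the delay contributes its worst phase shift at the upper end $\tau\to\pi m r/4$. After fixing $d=0$ and the critical delay, the inequality should collapse to a single-variable claim in $\omega$ (or $\phi$) that can be dispatched by elementary estimates on $\cos$ and $\sin$ over the relevant interval, using $\gamma m r^2\leq 2$. Establishing monotonicity in $d$ rigorously, and checking that the boundary value of $\tau$ really is the extremal case uniformly in $\omega$, is the part requiring care.

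The hard part will be controlling the joint dependence on $\omega$ and $\tau$ cleanly. Because the delay enters through $e^{-j\omega\tau}$, both the real and imaginary parts of the denominator oscillate in $\omega$, so a naive bound that treats them independently will be too weak; the factor $\pi+6j$ (equivalently the half-plane slope $\angle\funof{\pi+6j}$ read off from \Cref{fig:delaydroop}) is chosen precisely so that the positive contribution survives these oscillations. I anticipate that the cleanest route is the frequency-response characterisation of \Cref{lem:fr}, reducing everything to a pointwise-in-$\omega$ inequality, and then grouping terms so that the combination $\gamma m r^2\leq 2$ together with $\omega\tau<\pi/4\cdot\funof{\gamma m r^2}^{-1}\cdots$ (the bound coming from \cref{eq:scfrdroop1}) guarantees the sign. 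Getting these groupings to line up so the single remaining scalar inequality is manifestly nonnegative — rather than merely numerically plausible — is where the real work lies, and I would expect to defer the full computation to \Cref{app:6} as the authors indicate.
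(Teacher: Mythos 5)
Your proposal sets the problem up correctly but stops short of a proof exactly where the proof is needed. The two load-bearing claims --- that the worst case over $d\geq 0$ occurs at $d=0$, and that the worst case over the delay occurs at the endpoint $\tau\to\pi m r/4$ uniformly in $\omega$ --- are asserted on heuristic grounds (``damping pushes the Nyquist point rightward'') and never established; note that after multiplication by $\pi+6j$ and inversion, the effect of $d$ on $z=\gamma/\bigl(j\omega(mj\omega+d+\tfrac{1}{r}e^{-j\omega\tau})\bigr)$ is not a simple rightward shift, so this step genuinely requires an argument. More importantly, the final scalar inequality is never carried out: you explicitly defer ``the full computation'' to the appendix, which is precisely the proof you were asked to supply. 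As you yourself observe, rationalising $z$ directly produces oscillating real and imaginary parts whose joint dependence on $\omega$, $\tau$ and $d$ is hard to control; your outline names this difficulty but does not overcome it.

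The idea the paper uses to get around this, and which is missing from your plan, is to stop working with the reciprocal altogether. After the canonical scaling $\tilde\omega=mr\omega$, $t=\tau/(mr)$, $k=1/(\gamma m r^2)$ (a better normalisation than your $\phi=\omega\tau$, since the hypotheses become exactly $k\geq\tfrac12$ and $t<\pi/4$, with the delay parameter rather than the frequency made dimensionless and bounded), and after using $k\geq\tfrac12$ to reduce to the case $k=\tfrac12$, the claim becomes $\text{Re}\bigl((\pi+6j)(\tfrac12+1/z)\bigr)>0$ where now $z=j\tilde\omega\bigl(j\tilde\omega+\tilde d+e^{-jt\tilde\omega}\bigr)$ is the \emph{denominator} curve. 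The elementary but crucial observation is that, for $z\neq 0$,
\[
\text{Re}\Bigl((\pi+6j)\bigl(\tfrac12+1/z\bigr)\Bigr)>0
\iff
\bigl(\text{Re}(z)+1\bigr)^2+\bigl(\text{Im}(z)+6/\pi\bigr)^2>1+36/\pi^2,
\]
i.e.\ the condition holds iff $z$ lies strictly outside the disk of centre $-1-6j/\pi$ and radius $\sqrt{1+36/\pi^2}$. This inversion converts the problem into a geometric one about an explicit curve: the damping now enters additively as $+j\tilde d\tilde\omega$, so its effect (and that of decreasing $t$) is to shift the curve away from the disk, and the critical case $d=0$, $t=\pi/4$ is identified by a tangency (slope-matching) argument between the curve and the circle --- which also shows $t<\pi/4$ is necessary, not merely sufficient. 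Without this reformulation, or some substitute for it, your outline cannot be completed as written.
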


\subsection{Stability of Automatic Generation Control (AGC)}\label{ex:3}

\begin{figure}
\centering
\begin{tikzpicture}[scale=.67,font=\footnotesize]
  \node[fill=white,draw, circle, inner sep=0, radius=0.2] (p1) at (0,.5) {$+$};
  \draw[fill=white] (.7,0) rectangle node[midway] {$\frac{k_i}{s}$} (1.5,1);
  \node[fill=white,draw, circle, inner sep=0, radius=0.2] (p2) at (2.2,.5) {$+$};
  \draw[fill=white] (2.9,0) rectangle node[midway] {$\frac{1}{1+sT_{g,i}}$} node[below, yshift=-8]{Governor} (4.5,1);
  \draw[fill=white] (5,0) rectangle node[midway] {$\frac{1}{1+sT_{t,i}}$} node[below, yshift=-8]{Turbine} (6.6,1);
  \node[draw, circle, inner sep=0, radius=0.2] (p3) at (7.4,.5) {$+$};
  \draw (8.1,0) rectangle node[midway] {$\frac{1}{m_is+d_i}$} node[below, yshift=-8]{Generator} (9.7,1);
  \draw[fill=white] (-.4,1.35) rectangle node[midway] { $\text{-}\beta_i$} (.4,2.35);
  \draw[fill=white](1.8,1.35) rectangle node[midway] {$\text{-}\frac{1}{r_i}$} (2.6,2.35);
  \draw[->] (-1.7,0.5) -- node[above, pos=.25] {$P_{d,i}\!-\!P_{N,i}$} (p1);
  \draw[->] (p1) -- (.7,.5);
  \draw[->] (1.5,.5) -- (p2);
  \draw[->] (p2) -- (2.9,.5);
  \draw[->] (4.5,.5) -- (5,.5);
  \draw[->] (6.6,.5) -- (p3);
  \draw[->] (p3) -- (8.1,.5);
  \draw[->] (9.7,.5) -- node[above, pos=.75] {$\dot{\theta_i}$} (10.5,.5);
  \draw[->] (9.85,.5) -- (9.85,3) -- (0,3) -- (0,2.35);
  \draw[->] (0,1.35) -- (p1);
  \draw[->] (2.2,1.35) -- (p2);
  \draw[->] (2.2,3) -- (2.2,2.35);
  \draw[->] (-1.2,0.5) -- (-1.2,-1.5) -- (7.4,-1.5) -- (p3);
\end{tikzpicture}
{\footnotesize
\begin{tabular}{p{.07\columnwidth} p{.07\columnwidth} p{.07\columnwidth} p{.07\columnwidth} p{.07\columnwidth} p{.07\columnwidth} p{.07\columnwidth}}
\\ [0.2ex]
\toprule
${}m{}$ & ${}d$ & ${}T_g$ &${}T_t$&${}r$&${}\beta$&${}k$\\ [0.2ex]
\midrule
0.16&0.02&0.08&0.40&3.00&0.33&0.30\\
0.20&0.02&0.06&0.44&2.73&0.40&0.20\\
0.12&0.02&0.07&0.30&2.82&0.38&0.40\\
\bottomrule
\end{tabular}}
\vspace{.3cm}
\caption{\label{fig:agcblock}Typical \gls{agc} controller architecture and parameters \cite{Bev14}.}
\end{figure}

\Gls{agc} is an extension of droop control. The primary objective of \gls{agc} is to regulate system frequency to the specified nominal value (50/60 Hz), while maintaining the flow of power between buses at their scheduled values. A typical controller architecture is shown in \Cref{fig:agcblock} \cite{Bev14}. From the control perspective, the synthesis task is to design the parameters $\beta_i,k_i$. It is common to select $\beta_i\approx{}1/r_i+d_i$,
with $k_i$ selected based on simulation studies to act on the time scale of 1-10 minutes (see e.g. \cite[\S{11.1.5}]{kundur_power_1994}), and it has been observed that when `large' $\beta_i$'s are chosen, stability issues can arise.

Within our framework, the generalised plant is
\[
G_i\s={\begin{bmatrix}
\frac{1}{m_is+d_i}&\frac{1}{\funof{m_is+d_i}\funof{1+sT_{g,i}}\funof{1+sT_{t,i}}}\\
\frac{1}{m_is+d_i}&\frac{1}{\funof{m_is+d_i}\funof{1+sT_{g,i}}\funof{1+sT_{t,i}}}\\
1&0
\end{bmatrix}},
\]
and the standard \gls{agc} controller is
\[
c_i\s=\begin{bmatrix}
-\frac{1}{r_i}&0
\end{bmatrix}+\frac{k_i}{s}\begin{bmatrix}
-\beta_i&1
\end{bmatrix}.
\]
To formally address the design of the \gls{agc} controller, we solved the analysis problem in \Cref{prob:analysis} for a range of values of the control parameters. For the first set of generator parameters this is shown in \Cref{fig:agctradeoff}. From this figure we see that the nominal design, which is marked by a cross, is a reasonable choice, though the robustness margin could be further improved by reducing $\beta_i$ or increasing $k_i$. We also see that increasing $\beta_i$ will reduce the optimal $\gamma$, justifying the observation that `large' $\beta_i$'s can cause stability problems.

\begin{figure}
    \centering
    \input{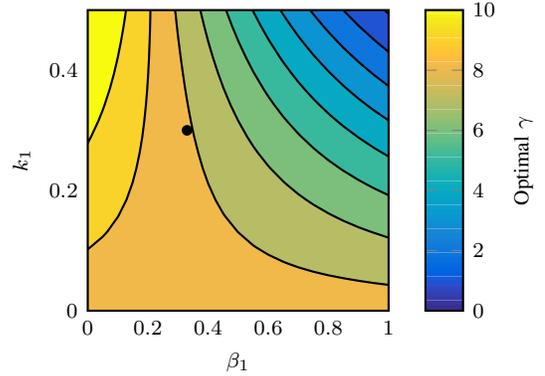}
    \caption{Solution to \Cref{prob:analysis} for the \gls{agc} model with the first set of parameters in \Cref{fig:agcblock} over a range of values for $\beta_1,k_1$. The nominal design is marked by the black dot.}
    \label{fig:agctradeoff}
\end{figure}

We can also design \gls{agc} controllers by solving the synthesis problem in \Cref{prob:synthesis} using \Hfty{} methods. Given the need for simple controllers, the value here is more in finding out what levels of robustness are possible, rather than in the controllers themselves. To this end we fixed the controller parameters $r_i,\beta_i$ to their values from \Cref{fig:agcblock}. Selecting the best possible $k_i\in\R$ gives an optimal solution of around $11$. However, by replacing the constant $k_i$ with a transfer function $k_i\in\Rat$, and solving the synthesis problem using the $\Hfty{}$ method from \Cref{sec:res3} yields an optimal solution of around $10^4$. This shows that the use of dynamic control has the potential to greatly increase the robustness margin. It is interesting to think how this can be exploited in the design of inverters, where the use of more complex controllers is a more realistic prospect.

\section{Conclusions}

A decentralised analysis and design framework for frequency control in power systems has been presented. Our framework allows for the design of decentralised controllers using only local models, and provides strong a-priori robust stability guarantees that hold independently of operating point, even as components are added to and removed from the grid. Furthermore our conditions can be applied even when the network consists of complex heterogeneous components, and can be checked using standard frequency response, state-space, and circuit theoretic tools. We illustrate the suitability of the framework for power systems by: (i) showing that the robustness of existing schemes can be analysed and further improved using the newly developed tools; and (ii) providing novel delay robustness criteria for the classical swing equations.


\bibliographystyle{IEEEtran}
\bibliography{Refs.bib,Refs-em.bib}

\begin{thebibliography}{10}
\providecommand{\url}[1]{#1}
\csname url@rmstyle\endcsname
\providecommand{\newblock}{\relax}
\providecommand{\bibinfo}[2]{#2}
\providecommand\BIBentrySTDinterwordspacing{\spaceskip=0pt\relax}
\providecommand\BIBentryALTinterwordstretchfactor{4}
\providecommand\BIBentryALTinterwordspacing{\spaceskip=\fontdimen2\font plus
\BIBentryALTinterwordstretchfactor\fontdimen3\font minus
  \fontdimen4\font\relax}
\providecommand\BIBforeignlanguage[2]{{%
\expandafter\ifx\csname l@#1\endcsname\relax
\typeout{** WARNING: IEEEtran.bst: No hyphenation pattern has been}%
\typeout{** loaded for the language `#1'. Using the pattern for}%
\typeout{** the default language instead.}%
\else
\language=\csname l@#1\endcsname
\fi
#2}}

\bibitem{PM17}
R.~Pates and E.~Mallada, ``Decentralised robust inverter-based control in power
  systems,'' \emph{IFAC-PapersOnLine}, vol.~50, no.~1, pp. 5548 -- 5553, 2017.

\bibitem{Milligan:2015ju}
M.~Milligan, B.~Frew, B.~Kirby, M.~Schuerger, K.~Clark, D.~Lew, P.~Denholm,
  B.~Zavadil, M.~O'Malley, and B.~Tsuchida, ``{Alternatives no more: Wind and
  solar power are mainstays of a clean, reliable, affordable grid},''
  \emph{IEEE Power and Energy Magazine}, vol.~13, no.~6, pp. 78--87, 2015.

\bibitem{Winter:2015dy}
W.~Winter, K.~Elkington, and G.~Bareux, ``Pushing the limits: Europe's new
  grid: Innovative tools to combat transmission bottlenecks and reduced
  inertia,'' \emph{{IEEE} Power and Energy Magazine}, vol.~13, no.~1, pp.
  60--74, 2015.

\bibitem{WoodWollenberg1996}
A.~J. Wood, B.~F. Wollenberg, and G.~B. Sheble, ``{Power Generation, Operation
  and Control},'' \emph{John Wiley {\&} Sons}, 1996.

\bibitem{Boemer:2010wa}
J.~Boemer, K.~Burges, C.~Nabe, and M.~P{\"o}ller, \emph{All {I}sland {TSO}
  Facilitation of Renewables Studies}.\hskip 1em plus 0.5em minus 0.4em\relax
  EirGrid and SONI, 2010.

\bibitem{Kirby:2005uy}
B.~J. Kirby, ``Frequency regulation basics and trends,'' Oak Ridge National
  Lab., Tech. Rep., 2005.

\bibitem{Driesen:ft}
J.~Driesen and K.~Visscher, ``{Virtual synchronous generators},'' in
  \emph{{IEEE} Power and Energy Society General Meeting: Conversion and
  Delivery of Electrical Energy in the 21st Century}.\hskip 1em plus 0.5em
  minus 0.4em\relax IEEE, New York, 2008.

\bibitem{m2016cdc}
E.~Mallada, ``{iDroop: A dynamic droop controller to decouple power grid's
  steady-state and dynamic performance},'' in \emph{55th IEEE Conference on
  Decision and Control}, 2016.

\bibitem{jpm2017cdc}
Y.~Jiang, R.~Pates, and E.~Mallada, ``{Performance tradeoffs of dynamically
  controlled grid-connected inverters in low inertia power systems},'' in
  \emph{56th IEEE Conference on Decision and Control}, 2017.

\bibitem{BL+06}
B.~Brogliato, R.~Lozano, B.~Maschke, and O.~Egeland, \emph{Dissipative Systems
  Analysis and Control: Theory and Applications}.\hskip 1em plus 0.5em minus
  0.4em\relax Springer, 2006.

\bibitem{arcak2016networks}
M.~Arcak, C.~Meissen, and A.~Packard, \emph{Networks of Dissipative Systems:
  Compositional Certification of Stability, Performance, and Safety}.\hskip 1em
  plus 0.5em minus 0.4em\relax Springer, 2016.

\bibitem{Par97}
J.~R. Partington, \emph{Interpolation, Identification, and Sampling (London
  Mathematical Society Monographs)}.\hskip 1em plus 0.5em minus 0.4em\relax
  Clarendon Press, 1997.

\bibitem{kundur_power_1994}
P.~Kundur, \emph{{Power System Stability and Control}}.\hskip 1em plus 0.5em
  minus 0.4em\relax McGraw-Hill Professional, 1994.

\bibitem{Zhao:2014bp}
C.~Zhao, U.~Topcu, N.~Li, and S.~Low, ``Design and stability of load-side
  primary frequency control in power systems,'' \emph{IEEE Transactions on
  Automatic Control}, vol.~59, no.~5, pp. 1177--1189, 2014.

\bibitem{Li:2016tcns}
N.~Li, C.~Zhao, and L.~Chen, ``Connecting automatic generation control and
  economic dispatch from an optimization view,'' \emph{IEEE Transactions on
  Control of Network Systems}, vol.~3, no.~3, pp. 254--264, 2016.

\bibitem{mallada2017optimal}
E.~Mallada, C.~Zhao, and S.~Low, ``Optimal load-side control for frequency
  regulation in smart grids,'' \emph{IEEE Transactions on Automatic Control},
  vol.~62, no.~12, pp. 6294--6309, 2017.

\bibitem{BW65}
R.~Brockett and J.~Willems, ``Frequency domain stability criteria--{P}art
  {I},'' \emph{IEEE Trans. on Automatic Control}, vol.~10, no.~3, pp. 255--261,
  1965.

\bibitem{PV17}
R.~Pates and G.~Vinnicombe, ``Scalable design of heterogeneous networks,''
  \emph{IEEE Transactions on Automatic Control}, vol.~62, no.~5, pp.
  2318--2333, May 2017.

\bibitem{HJ91}
R.~Horn and C.~Johnson, \emph{Topics in Matrix Analysis}.\hskip 1em plus 0.5em
  minus 0.4em\relax Cambridge University Press, 1991.

\bibitem{Jon01}
U.~J\"{o}nsson, \emph{Lecture Notes on Integral Quadratic Constraints}, 2001.

\bibitem{LV06}
I.~Lestas and G.~Vinnicombe, ``Scalable decentralized robust stability
  certificates for networks of interconnected heterogeneous dynamical
  systems,'' \emph{IEEE Transactions on Automatic Control}, vol.~51, no.~10,
  pp. 1613 --1625, 2006.

\bibitem{PV12}
R.~Pates and G.~Vinnicombe, ``Stability certificates for networks of
  heterogeneous linear systems,'' in \emph{51st IEEE Conference on Decision and
  Control}, 2012, pp. 6915--6920.

\bibitem{CS09}
M.~Z.~Q. Chen and M.~C. Smith, ``A note on tests for positive-real functions,''
  \emph{IEEE Transactions on Automatic Control}, vol.~54, no.~2, pp. 390--393,
  2009.

\bibitem{CN68}
Y.-S. Cho and K.~Narendra, ``An off-axis circle criterion for stability of
  feedback systems with a monotonic nonlinearity,'' \emph{IEEE Transactions on
  Automatic Control}, vol.~13, no.~4, pp. 413--416, 1968.

\bibitem{Pat15}
R.~Pates, ``A loopshaping approach to controller design in networks of linear
  systems,'' in \emph{54th {IEEE} Conference on Decision and Control}, 2015,
  pp. 6276--6281.

\bibitem{SKS94}
W.~Sun, P.~P. Khargonekar, and D.~Shim, ``Solution to the positive real control
  problem for linear time-invariant systems,'' \emph{IEEE Transactions on
  Automatic Control}, vol.~39, no.~10, pp. 2034--2046, 1994.

\bibitem{Vin00}
G.~Vinnicombe, \emph{Uncertainty and Feedback, $\mathscr{H}_\infty$
  Loop-Shaping and the $\nu$-Gap Metric}.\hskip 1em plus 0.5em minus
  0.4em\relax World Scientific Publishing Company, 2000.

\bibitem{Bev14}
H.~Bevrani, \emph{Robust Power System Frequency Control (Power Electronics and
  Power Systems)}.\hskip 1em plus 0.5em minus 0.4em\relax Springer, 2014.

\bibitem{Smi92}
R.~L. Smith, ``Some interlacing properties of the {S}chur complement of a
  {H}ermitian matrix,'' \emph{Linear Algebra and its Applications}, vol. 177,
  pp. 137 -- 144, 1992.

\end{thebibliography}
%

%


 \vspace{-6ex}
\begin{IEEEbiographynophoto}{Richard Pates}
received the M.Eng degree in 2009, and the Ph.D. degree in 2014, both from the University of Cambridge. He is currently a Researcher at Lund University. His research interests include scale-free methods for control system design, stability and control of electrical power systems, and fundamental performance limitations in large-scale systems.
\end{IEEEbiographynophoto}
 \vspace{-7ex}
\begin{IEEEbiographynophoto}{Enrique Mallada}
 is an assistant professor of electrical and computer engineering at Johns Hopkins University. Before joining Hopkins in 2016, he was a post-doctoral fellow at the Center for the Mathematics of Information at the California Institute of Technology from 2014 to 2016. He received his ingeniero en telecomunicaciones degree from Universidad ORT, Uruguay, in 2005 and his Ph.D. degree in electrical and computer engineering with a minor in applied mathematics from Cornell University in 2014. 
  \vspace{-3ex}
\end{IEEEbiographynophoto}






\appendix

\subsection{Proof of \Cref{lem:rescale}}\label{app:lem1}

\begin{proof}
\Cref{ass:1} implies that $0\preceq{}L_B$, from which standard arguments (using e.g. Gershgorin discs) show that
\[
0\preceq{}\begin{bmatrix}
\Gamma_1&0\\0&\Gamma_2
\end{bmatrix}^{-\frac{1}{2}}\begin{bmatrix}
L_{B,11}&L_{B,11}\\
L_{B,21}&L_{B,22}
\end{bmatrix}\begin{bmatrix}
\Gamma_1&0\\0&\Gamma_2
\end{bmatrix}^{-\frac{1}{2}}\preceq{}I.
\]
The result then follows immediately from \cite[Theorem 5]{Smi92}.
\end{proof}

\subsection{Proof of \Cref{lem:marg}}\label{app:1}

\begin{proof}
Since $p\in\mathcal{P}_h$, there exists an $\epsilon$ such that $h\s\funof{1+\frac{p}{s}}-\epsilon\in\PR$. Therefore
\[
\frac{1-\gamma}{\gamma}h\s+h\s\funof{1+\frac{p\s}{s}}-\epsilon\in\PR.
\]
This implies that $h\s\funof{1+\gamma{}\frac{p\s}{s}}-\gamma\epsilon\in\PR$. Consequently $\gamma{}p\s\in\mathcal{P}_h$ for all $0<\gamma\leq{}1$ as required.
\end{proof}

\subsection{Proof of \Cref{lem:fr}}\label{app:2}

\begin{proof}
Denote $\phi\s=\frac{1-s}{1+s}$, and let $z=\phi\s$ and $G\funof{z}=g\funof{\phi^{-1}\funof{z}}$. Since $\phi$ maps the open right half plane to the open unit circle,
\[
\sup_{\text{Re}\funof{s}>0}\text{Re}\funof{g\s}=\sup_{\abs{z}<1}\text{Re}\funof{G\funof{z}}.
\]
Since $g\s\in\discalg$, $G\funof{z}$ is analytic in the open unit circle, and continuous on the unit circle \cite{Par97}. Therefore by the maximum modulus principle
\[
\sup_{\abs{z}<1}\text{Re}\funof{G\funof{z}}=\!\!\max_{t\in\sqfunof{0,2\pi}}\text{Re}\funof{G\funof{e^{jt}}}=\!\!\!\!\!\max_{\omega\in\R\cup\cfunof{\infty}}\text{Re}\funof{g\s}.
\]
The result is now immediate from \Cref{def:pr}.
\end{proof}

\subsection{Proof of \Cref{lem:kyp}}\label{app:3}

\begin{proof}
By \cite[Lemma 2.3]{SKS94}, if
\[
G\s=\sqfunof{\begin{array}{c|c}
    A & B \\\hline
    C & D
\end{array}},
\]
then the condition $G\in\ESPR$ is equivalent to the existence of an $X\succ{}0$ such that
\[
    \begin{bmatrix}
        A^TX+XA&C^T-XB\\C-B^TX&-\funof{D+D^T}
    \end{bmatrix}\prec{}0.
\]
Therefore we need only show that
\[
h\s\funof{1+\frac{\gamma{}p\s}{s}}=\sqfunof{\begin{array}{c|c}
    A & B \\\hline
    C & D
\end{array}},
\]
where $A,B,C,D$ are given as in (ii). Applying standard formulae for multiplying state-space realisations shows that $h\s\gamma{}p\s/s$ and $sh\s$ have realisations
\[
\begin{aligned}
\sqfunof{\begin{array}{cc|c}
    A_1&B_1C_2 & 0 \\
    0&A_2&B_2\\\hline
    \gamma{}C_1&\gamma{}D_1C_2 & 0
\end{array}}\;\text{and}\;
\sqfunof{\begin{array}{c|c}
    A_2 & B_2 \\\hline
    C_2A_2 & C_2B_2
\end{array}}
\end{aligned}
\]
respectively, from which the result immediately follows.
\end{proof}

\subsection{Proof of \Cref{lem:offaxiscirc}}\label{app:offaxis}

\begin{proof}
Let $g_i=\funof{s/\funof{s+T}}\funof{1+p_i\s/s}$. It is easily shown that $g_i\in\discalg$, and that for $T$ sufficiently large there exists an $\epsilon>0$ such that for all $i$ and $\omega\geq{}0$ 
\[
\text{Re}\funof{-je^{j\funof{\theta+1/T}}g_i\jw}\geq{}\epsilon{}.
\]
Therefore by \cite[Theorem 2]{CN68} there exists an `$RC$' multiplier $h_{RC}$ such that $h_{RC}g_i\in\ESPR$. Consequently if $h\s=h_{RC}\s{}s/\funof{s+T}$, then $p_1,\ldots{},p_n\in\mathcal{P}_h$ as required.
\end{proof}

\subsection{Proof of \Cref{lem:passive}}\label{app:7}

\begin{proof}
Since $p_i\in\ESPR$ there exists an $\epsilon>0$ and a $\gamma>0$ such that for all $i$ and $\omega\in\R\cup\cfunof{\infty}$,
\[
\text{Re}\funof{p_i\jw}\geq{}\epsilon{},\,\abs{\text{Im}\funof{p_i\jw}}\leq{}\gamma.
\]
Let $h\s=s/\funof{s+T}$. By \Cref{lem:fr}, $p_i\in\mathcal{P}_h$ if and only if there exists and $\delta>0$ such that
\[
\text{Re}\funof{j\omega/\funof{j\omega+T}\funof{1+p_i\jw/j\omega}}\geq{}\delta{}.
\]
This is equivalent to
\[
\begin{aligned}
\frac{T\text{Re}\funof{p_i\jw}+\omega\funof{\omega+\text{Im}\funof{p_i\jw}}}{\omega^2+T^2}&\geq{}\delta{}&\Longleftarrow{}\\
\frac{T\epsilon+\omega\funof{\omega-\text{sign}\funof{\omega}\gamma{}}}{\omega^2+T^2}&\geq{}\delta{}.
\end{aligned}
\]
By picking $T$ sufficiently large there will always exist a $\delta>0$ such that the above is satisfied, which completes the proof.
\end{proof}

\subsection{Proof of \Cref{lem:delays}}\label{app:6}

\begin{figure}
    \centering
\begin{tikzpicture}
 
  \def\phi{(0.65*pi)} 
  \def\phia{(0.55*pi)} 
  \def\phib{(0.45*pi)} 
  \def\phic{(0.35*pi)} 
  \def\phid{(0.25*pi)} 
  \def\phie{(0.15*pi)} 
  \def\valtheta{0.495} 
  \def\xintersect{-0.8244} 
 
  \begin{axis}[axis equal image, axis lines=middle, domain=0:50, samples=10000, xmax =
1.5, xmin = -5, ymax = 1, ymin = -2,xtick={-1},ytick={-4},extra y ticks={-1.9099},extra y tick style={yticklabel={$-\frac{6}{\pi}$}}]
    \addplot[mark=none, smooth, samples=250, variable=r,domain=0:180]
      ({-1+2.1558*cos(r)},
       {-1.9099+2.1558*sin(r)});
       \addplot[mark=none, gray, smooth, samples=250, variable=r,domain=0:3]
      ({-r*(r-sin(deg(0.68*r))},
       {r*(0.1+cos(deg(0.68*r)))});
       \addplot[mark=none, gray, smooth, samples=250, variable=r,domain=0:3]
      ({-r*(r-sin(deg(0.71*r))},
       {r*(cos(deg(0.71*r)))});
       \addplot[mark=none, smooth, samples=250, variable=r,domain=0:3]
      ({-r*(r-sin(deg(0.7854*r))},
       {r*cos(deg(0.7854*r))});
    \draw[->,,>=stealth] (axis cs:-1,-1.9099) -- node[left]{$\sqrt{1+\frac{36}{\pi^2}}$} (axis cs:-2,0);
  \end{axis}
 
\end{tikzpicture}
    \caption{Sketch of the geometric argument used in the proof of \Cref{lem:delays}. The critical curve (that just touches the circle) is found by setting $d=0$, and finding $t$ to match the slopes of the circle and the curve at their point of intersection. The effect of increasing $d$ and decreasing $t$ is to shift the curve away from the circle, as shown in grey.}
    \label{fig:sephyp}
\end{figure}
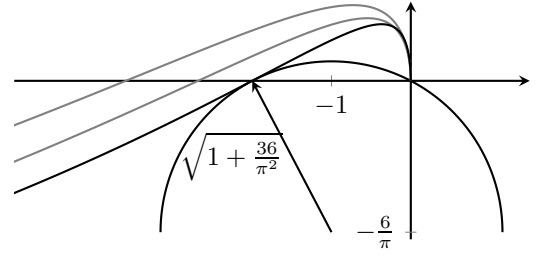

\begin{proof}
First note that by putting $k=1/{m\gamma{}r^2}$, $\tilde{\omega}=mr\omega$ and $t={\tau}/{mr}$,
we obtain the following canonical form:
\[
\frac{1}{j\omega{}\funof{mj\omega{}+d+\frac{1}{r}e^{-j\omega{}\tau}}}=\frac{\nicefrac{1}{k}}{j\tilde{\omega}\funof{j\tilde{\omega}+d/mrk+e^{-tj\tilde{\omega}}}}.
\]
From the conditions of the theorem $k\geq{}\frac{1}{2}$, and therefore it is sufficient to show that
\begin{equation}\label{eq:suffirst}
\text{Re}\funof{\funof{\pi+6j}\funof{\frac{1}{2}+\frac{1}{j\tilde{\omega}\funof{j\tilde{\omega}+d/mrk+e^{-tj\tilde{\omega}}}}}}>0.
\end{equation}
Given $z\neq{}0$ it is simple to show that
\[
\begin{aligned}
\text{Re}\funof{\funof{\pi+6j}\funof{1/2+1/z}}&>0&\Longleftrightarrow{}\\
\funof{\text{Re}\funof{z}+1}^2+\funof{\text{Im}\funof{z}+6/\pi}^2&>\sqrt{1+36/\pi^2}.
\end{aligned}
\]
Therefore if the curve ${j\tilde{\omega}\funof{j\tilde{\omega}+d/mrk+e^{-tj\tilde{\omega}}}}$ lies strictly outside a circle with centre $-1-6j/\pi$ and radius $\sqrt{1+36/\pi^2}$, then the theorem holds. A lengthy but routine geometric argument then shows that this is the case for all $d\geq{}0$ and $\tilde{\omega}>0$ if and only if $t<\pi/4$, from which the result follows. This is illustrated in \Cref{fig:sephyp}.
\end{proof}
The following generalization allows for arbitrary half-planes (\Cref{lem:delays} corresponds to the case $\alpha=\pi/4$). Reducing $\alpha$ allows for stronger delay robustness guarantees at the expense of requiring larger droop constants $r$.
\begin{lemma}
Let $m\geq{}0,r>0,\gamma{}>0$ and $\pi/2>\alpha>0$. If
\[
r\leq{}\sqrt{\frac{\pi\funof{\pi-2\alpha{}}}{4\alpha^2m\gamma}},
\]
then for all $0\leq{}\tau<\alpha{}mr,d\geq{}0$ and $\omega>0$,
\[
\text{Re}\!\funof{\!\!\funof{\pi{}\!+\!\frac{2j\funof{\pi-\alpha}}{\alpha}}\!\!\funof{1\!+\!\frac{\gamma{}}{j\omega\funof{mj\omega+d+\frac{1}{r}e^{-j\omega{}\tau}}}}\!\!}\!\!>\!0.
\]
\end{lemma}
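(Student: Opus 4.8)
The plan is to mirror the proof of \Cref{lem:delays}, of which this statement is exactly the $\alpha=\pi/4$ instance, keeping the two structural constants symbolic throughout. Write $\beta:=2\funof{\pi-\alpha}/\alpha$ for the imaginary part of the multiplier, so that the multiplier is $\pi+\beta j$, and set $k_0:=4\alpha^2/\funof{\pi\funof{\pi-2\alpha}}$. First I would apply the same normalisation as before, namely $k=1/\funof{m\gamma{}r^2}$, $\tilde{\omega}=mr\omega$ and $t=\tau/\funof{mr}$, to reduce the expression to the canonical form
\[
\frac{\gamma}{j\omega\funof{mj\omega+d+\tfrac1r e^{-j\omega\tau}}}=\frac{1/k}{j\tilde{\omega}\funof{j\tilde{\omega}+\tilde d+e^{-t j\tilde{\omega}}}},
\]
where $\tilde d\geq0$ is a positive multiple of $d$. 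Under this substitution the hypothesis on $r$ becomes precisely $k\geq k_0$ and the delay bound becomes $t<\alpha$; one checks that for $\alpha=\pi/4$ these collapse to $k\geq\tfrac12$ and $t<\pi/4$, recovering \Cref{lem:delays}.

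Next, since $\pi+\beta j$ has positive real part, multiplying the bracketed quantity by the positive constant $1/k$ does not change the sign of the real part, so the claim is equivalent to $\text{Re}\funof{\funof{\pi+\beta j}\funof{k+1/z}}>0$ with $z=j\tilde{\omega}\funof{j\tilde{\omega}+\tilde d+e^{-t j\tilde{\omega}}}$. Because $\text{Re}\funof{\funof{\pi+\beta j}\funof{k+1/z}}=k\pi+\text{Re}\funof{\funof{\pi+\beta j}/z}$ is affine and strictly increasing in $k$, it suffices to verify it at the worst case $k=k_0$. Completing the square exactly as in the scalar case then converts $\text{Re}\funof{\funof{\pi+\beta j}\funof{k_0+1/z}}>0$ into the geometric statement that $z$ must lie strictly outside the disc centred at $-\tfrac{1}{2k_0}-\tfrac{\beta}{2\pi k_0}j$ with radius $\tfrac{1}{2k_0}\sqrt{1+\beta^2/\pi^2}$. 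For $\alpha=\pi/4$ this is the disc of \Cref{fig:sephyp}, centred at $-1-\tfrac{6}{\pi}j$ with radius $\sqrt{1+36/\pi^2}$.

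It then remains to show that the curve $\tilde\omega\mapsto z\funof{\tilde\omega}$ stays strictly outside this disc for every $\tilde\omega>0$ and every $\tilde d\geq0$, precisely when $t<\alpha$. Setting $\tilde d=0$ gives the explicit parametrisation $x=-\tilde\omega^2+\tilde\omega\sin\funof{t\tilde\omega}$, $y=\tilde\omega\cos\funof{t\tilde\omega}$. One notes that the origin sits exactly on the circle (the completed-square inequality has no constant term, so $\abs{\text{centre}}=\text{radius}$ automatically), that the curve leaves the origin tangent to the imaginary axis and runs off to $x\to-\infty$, and that increasing $\tilde d$ translates it strictly upward, away from the disc. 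The core of the argument, and the step I expect to be the main obstacle, is the tangency computation: showing that the $\tilde d=0$ curve just touches the circle exactly at the threshold $t=\alpha$, by equating the slope of the curve with the slope of the circle at their common point and checking that the resulting critical value of $t$ is $\alpha$ for the general centre and radius above. This is the ``lengthy but routine geometric argument'' of \Cref{lem:delays}; the only genuinely new work is confirming that the calibration $\beta=2\funof{\pi-\alpha}/\alpha$ and $k_0=4\alpha^2/\funof{\pi\funof{\pi-2\alpha}}$ makes the tangency occur at $t=\alpha$ rather than at $\pi/4$, after which monotonicity in $\tilde d$ delivers the result for all $d\geq0$.
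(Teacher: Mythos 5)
Your proposal is correct and takes essentially the same route as the paper: the paper gives no separate proof of this generalization---it is stated as the arbitrary-half-plane version of \Cref{lem:delays}---so the intended argument is exactly yours, namely the proof of \Cref{lem:delays} with $\beta=2(\pi-\alpha)/\alpha$ and $k_0=4\alpha^2/(\pi(\pi-2\alpha))$ kept symbolic, and your reductions all check out (the hypotheses become $k\geq k_0$ and $t<\alpha$, and the disc has centre $-\tfrac{1}{2k_0}-\tfrac{\beta}{2\pi k_0}j$ and radius $\tfrac{1}{2k_0}\sqrt{1+\beta^2/\pi^2}$). Moreover, the tangency you defer as the ``main obstacle'' does land where you expect: the critical curve meets the circle at its second real-axis intersection $z=-1/k_0$ (where $t\tilde{\omega}=\pi/2$), and matching the curve's slope $(1-\pi/\alpha,\,-\pi/2)$ with the circle's tangent there yields exactly $\beta=2(\pi-\alpha)/\alpha$, so your plan leaves the remaining geometric verification at the same ``lengthy but routine'' level of detail as the paper itself.
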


\end{document}